\documentclass{article}

\usepackage[utf8]{inputenc}
\usepackage[english]{babel}
\usepackage{amsmath}
\usepackage{amsfonts}
\usepackage{amssymb}
\usepackage{amsthm}
\usepackage{graphicx}

\usepackage[a4paper, left=2.5cm, right=2.5cm, top=2cm, bottom=2cm]{geometry}
\usepackage{xcolor}
\usepackage{hyperref}
\usepackage{cite}
\usepackage{subcaption}

\newtheorem{theorem}{Theorem}

\newtheorem{statement}{Statement}
\newtheorem{remark}{Remark}
\newtheorem{corollary}{Corollary}
\newtheorem{definition}{Definition}

\usepackage{stmaryrd}

\newcommand{\bu}{{\bf u}}
\newcommand{\bp}{{ p}}
\newcommand{\bg}{{\bf g}}
\newcommand{\bfv}{{\bf f}}
\newcommand{\bFi}{{\bf \Phi}}
\newcommand{\bv}{{\bf v}}
\newcommand{\bn}{{\bf n}}
\newcommand{\bchi}{\boldsymbol{ \chi}}
\newcommand{\bpsi}{\boldsymbol{ \psi}}
\newcommand{\mC}{\mathcal{C}}

\newcommand{\bsigmaE}{\boldsymbol{ \sigma}^\varepsilon}
\newcommand{\bsigma}{\boldsymbol{ \sigma}}

\title{Homogenization procedure in the Cauchy problem for the elastic composites. \footnote{
S.S. acknowledge the financial support of FAPERJ APQ1 processo №.  E-26/210.614/2024 and of the Coordenação de Aperfeiçoamento de Pessoal de Nível Superior – Brasil (CAPES) – Finance Code 001.

R. R.-R. thanks to Chamada CNPq №. 09/2023 PQ-2 Productividade em Pesquisa, processo №. 307188/2023-0 e ao Edital UFF PROPPI N. 05/2022.
}}
\author{Sergey Sergeev\footnote{Instituto de Matem\'{a}tica, Universidade Federal do Rio de Janeiro, Rio de Janeiro, Brazil,  Av. Athos da Silveira Ramos, 149, \texttt{sergeevse1@im.ufrj.br}}, Reinaldo Rodriguez-Ramos\footnote{Facultad de Matemática y Computación, Universidad de La Habana, La Habana, Cuba; PPG-MCCT, \\
Universidade Federal Fluminense, Volta Redonda, Brazil, \texttt{rerora2006@gmail.com}}}
\begin{document}

\maketitle

\abstract{In the present work we consider the problem of the homogenization of the Cauchy problem for the equations of the elasticity. We assume the composite material is constructed as a periodic repetition along the chosen  axis of the unit cell which models to different materials. The homogenization procedure is based on the operator separation of variables, which  gives the new incites for the homogenization of the elasticity equations and provides new way of the homogenization and constructing the homogenized equation. 

{\bf Keywords:} composite material, elasticity, Cauchy problem, homogenization problem, operator separation of variables
}

\section{Introduction}
\label{sec_intro}

The homogenization problem in the elastic problems is one of the classical problems \cite{oleinik_mathematical_1992, bakhvalov_homogenisation_1989, bensoussan_asymptotic_2011, cioranescu_introduction_1999, sanchez-palencia_non-homogeneous_1980, penta_asymptotic_2017, parnell_homogenization_2008}. 
Following \cite{brito-santana_dispersive_2015}, in the present work   we consider the composite elastic material in the whole space with periodic layer structure  corresponding to the unit cell 
$$
Y^\varepsilon=\{x_2\in \mathbb{R},\,x_1\in[0,\,\varepsilon l_1]\},\quad y_1=\frac{x_1}{\varepsilon}\in[0,\,l_1].
$$
The composite material is modeled by the periodic repetition of the $Y^\varepsilon$ along the axis $x_1$. The unit cell reflects the combination of two different materials, called A and B. We introduce the coefficient $\gamma$ - the volume fraction - which  describes the width of each  material in the unit cell
\begin{equation}
\label{two_mater}
y_1\in[0,\,\gamma l_1) \quad y_1\in(\gamma l_1,\,l_1],\quad \gamma\in(0,\,1).
\end{equation}
The $l_1$ is the proper length of cell and parameter $\varepsilon\ll 1$ is a small parameter which is the ratio of length $l_1$ to the length $L$ representing the distance of the wave propagation up to the given fixed time moment $T$.

The homogenization problem is formulated as the consideration of the limit $\varepsilon\to 0$. In \cite{brito-santana_dispersive_2015} the harmonic dependence on time was assumed which leads to the spectral problem. The result of the homogenization was the dispersion relation obtained up to the $O(\varepsilon^2)$. In the present problem we assume the presence of the localized initial data  and we cannot assume the harmonic dependence on time.

We propose the  different approach to the homogenization procedure, which is based on the adiabatic approximation (separation of variables in operator form).  In general one cannot exactly separate slow and fast variables, but it can be done approximately. This method of separation of variables proved to be useful in the homogenization problems of the theory of linear waves propagation, see for example \cite{grushin_homogenization_2013, dobrokhotov_asymptotic_2016, sergeev_asymptotic_2022}. The general approach is based on the representation of the solution in the form of the action of the (pseudo)-differential  operator \cite{maslov_semi-classical_2001, hormander_analysis_2007, martinez_introduction_2002, zworski_semiclassical_2012} on some other function. The main idea is that the only operator depends on the fast variables, while the function satisfies the equation with constant coefficients.

 The similar approach was proposed in \cite{allaire_crime_2022}, where the solution was approximated with the help of sequence of the differential operators in spatial and temporal variables. 

The adiabatic approximation method allows to describe the homogenized equation  and the internal structure of its solution.  Usually the final structure of the homogenized equation is known, but, on the same time, the structure of the solution is not. Our approach provides an alternative way of the analysis and gives very useful internal information about the solution of the homogenized problem. This approach can be extended on the quasi-periodic structure, contrary to \cite{brito-santana_dispersive_2015}.

In the present work we develop the operator separation of variables for the Cauchy problem for the elastic problems. We do not focus ourselves to the questions of the existence of solutions. We provide a formal asymptotic expansion, the so-called the formal asymptotic. 

This work is organized as follows. In the Section \ref{sec_Prob_stat} we state the problem and formulate the main results. In the Section \ref{sec_num_examp} we provide numerical calculations which illustrate our results, while in the Section \ref{conclus} we provide discussion of the results. Section \ref{sec_Sep_Var} describes the general approach of the operator separation of variable method. In the Section \ref{sec_hom_thm_proof} we provide an asymptotic expansion and proof of the main homogenization theorem. In the Appendix \ref{PsiDO} we describe some useful relations for the pseudo-differential operators, used in the present work.

%
%S.S. acknowledge the financial support of FAPERJ APQ1 processo №.  E-26/210.614/2024 and of the Coordenação de Aperfeiçoamento de Pessoal de Nível Superior – Brasil (CAPES) – Finance Code 001.
%
%R. R.-R. thanks to Chamada CNPq №. 09/2023 PQ-2 Productividade em Pesquisa, processo №. 307188/2023-0 e ao Edital UFF PROPPI N. 05/2022.

\section{Problem statement and the main results}
\label{sec_Prob_stat}
We pose the following Cauchy problem
\begin{gather}
\label{eq1}
\nabla_x\cdot \bsigmaE(x,\,t)-\rho(\frac{x_1}{\varepsilon})\frac{\partial^2}{\partial t^2}\bu^\varepsilon(x,\,t)=0,\quad x=(x_1,\,x_2)\in\mathbb{R}^2,\\
\label{init_cond}
\bu^\varepsilon(x,\,0)=\bfv(x),\quad \frac{\partial \bu^\varepsilon}{\partial t}(x,\,0)=\bg(x).
\end{gather}
Functions $\bfv(x)$ and $\bg(x)$ are the given functions.
We assume the following $\bu^\varepsilon(x,\,t)\in L_{\infty}(\mathbb{R}^2)\cap H^1(\mathbb{R}^2)$, for any time moment $t\ge 0$.

Here the $\bu^\varepsilon(x,\,t)=(u_1(x,\,t),\,u_2(x,\,t))$ is the dislocation vector and
$$
\bsigmaE(x,\,t)=\frac{1}{2}C(\frac{x_1}{\varepsilon}): \left(\nabla\otimes \bu^\varepsilon(x,\,t)+(\nabla\otimes \bu^\varepsilon(x,\,t))^T\right),
$$
 is the stress tensor.
 
 We use the following notation. The symbol ``$\cdot$'' means contraction of the tensors, while ``$:$'' is the double contraction. The symbol ``$\otimes$'' means the Kronecker product.

The tensor $C_{ijkl}(y_1)$ describes the properties of the material and it is the completely positive tensor and satisfies the major and minor symmetry conditions
\begin{equation}
\label{C_tens}
C_{ijkl}(y_1)=C_{jikl}(y_1)=C_{ijlk}(y_1)=C_{klij}(y_1),\quad 
C_{ijkl}\xi_{ij}\xi_{kl}\ge A |\xi_{mn}|^2,\,A>0.
\end{equation}
In our problem this tensor depends only on $y_1$ and it is piece-wise periodic
\begin{equation}
\label{C_tens_period}
C(y_1)=C^A(y_1),\, y_1\in[0,\,\gamma l_1],\quad 
C(y_1)=C^B(y_1),\, y_1\in[\gamma l_1,\,l_1].
\end{equation}
Function $\rho(y_1)$ describes the density of the material and it satisfies the similar conditions
$$
\rho(y_1)=\rho^B(y_1),\, y_1\in[0,\,\gamma l_1],\quad 
\rho(y_1)=\rho^B(y_1),\, y_1\in[\gamma l_1,\,l_1].
$$
Here indices $A$ and $B$ denote the different materials $A$ and $B$.

During the presence of two materials in the unit cell (\ref{two_mater}) we introduce additional conditions on the interface $y_1=\gamma l_1$. Let us denote the set of the points as
$$
S^\varepsilon=\bigcup_{n\in\mathbb{Z}}\left\{x=(x_1,x_2)\in\mathbb{R}^2:\,x_1=\varepsilon(n+\gamma)l_1\right\}.
$$
In this definition, $x_2\in\mathbb{R}$ remains free, so each component of $S^\varepsilon$ is a vertical interface in the physical plane.

We have two following additional conditions in the interface 
\begin{equation}
\label{BC_interface}
\llbracket \bu^\varepsilon(x,\,t) \rrbracket=0,\quad \llbracket \bsigmaE(x,\,t)\cdot \bn\rrbracket=0,\quad x\in S^\varepsilon.
\end{equation} 
Here $\bn$ is the outward normal vector along the positive direction of $x_1$. The brackets $\llbracket \cdot \rrbracket$ define the jumps in the interface points.

We pose the homogenization problem for the \eqref{eq1}, \eqref{init_cond}, \eqref{BC_interface} when the $\varepsilon\to 0$. We provide the solution by the method of the operator separation of variables. The short description of this method is the following. We start from the ansatz of the solution in the form where slow $x$ and rapid $y_1=x_1/\varepsilon$ variables are separated $\bu^\varepsilon(x,\,t)=\bFi(x,\,x_1/\varepsilon,\,t)$. The next step is to write the representation $\bFi(x,\,y_1,\,t)=\bchi(x,\,y_1,\,-i \nabla_x) v(x,\,t)$, where $\bchi(x,\,y_1,\,-i\nabla_x)$ is the vector of unknown pseudo-differential operators and $v(x,\,t)$ is the scalar function. Function $v(x,\,t)$ is the solution of the equation with the constant coefficients. The vector-symbol $\bchi(x,\,y_1,\,p)$ is the solution of some spectral problem with respect to the  variable $y_1$. It is almost impossible to determine the vector-symbol $\bchi(x,\,y_1,\,p)$ exactly, thus we are using the asymptotic expansion of this symbol with respect to the small parameter $\varepsilon$. This leads to the asymptotic approximation to the solution of the original equation \eqref{eq1}.

Now let us present the homogenization results. We start from the definition of the homogenized tensor.
Let us introduce the tensors $C^1(y_1)$ and $C_1(y_1)$ of the 3-rd order and the symmetric matrix $\mC(y_1)$
\begin{gather}
\label{tens_C1_Cup1}
C^1(y_1)=C_{i1kl}(y_1),\quad C_1(y_1)=C_{ijk1}(y_1),\\
\label{matr_C}
\mC(y_1)=\{C_{i1k1}(y_1)\}=\begin{pmatrix}
C_{1111}(y_1) & C_{1121}(y_1)\\
C_{2111}(y_1) & C_{2121}(y_1)
\end{pmatrix}.
\end{gather}

For the matrix $\mC$ and the tensors $C_1$ and $C^1$ we have the following symmetry property.
\begin{statement}
\label{st_C1_Cup1_symmetry}
The matrix \eqref{matr_C} is positive definite
\begin{equation}
\label{matr_C_posit}
\mC(y_1)w_i w_k\ge A |w|^2,\,A>0,\quad \forall w\in\mathbb{R}^2.
\end{equation}

The tensors $C_1(y_1)$ and $C^1(y_1)$ have minor right and left symmetries correspondingly
\begin{equation}
\label{C1_Cup1_symmetry}
(C_1(y_1))_{psq}=(C_1(y_1))_{pqs},\quad (C^1(y_1))_{sqp}=(C^1(y_1))_{spq}.
\end{equation}

The tensors $C_1(y_1)$ and $C^1(y_1)$ are symmetrical in the sense of the cyclic permutation of the indices
\begin{equation}
\label{C1_T_permut}
(C_1(y_1))_{psq}=(C^1(y_1))_{qps}.
\end{equation}
\end{statement}

\begin{proof}
To show positive definiteness of the matrix $\mC$, we use positive definiteness of the tensor \eqref{C_tens}, which is $C_{ijkl}\xi_{ij}\xi_{kl}\ge A |\xi_{mn}|^2$. Let us take $\xi_{i2}=0$, for $i=1,\,2$. Then for this particular case we have $C_{ijkl}\xi_{ij}\xi_{kl}=\mC_{ik}w_i w_k$, where $w_i=(\xi_{11},\,\xi_{21})$. Because $(\xi_{11},\,\xi_{21})$ is arbitrary, we get \eqref{matr_C_posit}.

The minor symmetries are the corollary of the minor symmetries \eqref{C_tens} of the initial tensor $C_{ijkl}$:
$$
(C_1)_{pqs}=C_{pqs1}=C_{qps1}=(C_{1})_{qps},\quad (C^1)_{sqp}=C_{s1qp}=C_{s1pq}=(C^1)_{spq}.
$$
The \eqref{C1_T_permut} is the corollary of the major symmetry of the tensor $C_{ijkl}$:
$$
(C_1(y_1))_{psq}=C_{psq1}(y_1)=C_{q1ps}(y_1)=C_{q1sp}(y_1)=(C^1(y_1))_{qsp}=(C^1(y_1))_{qps}.
$$
\end{proof}

Let the brackets $\langle\cdot\rangle$ denote the mean value of the corresponding function
$$
\langle f(y_1)\rangle=\frac{1}{l_1}\int\limits_{0}^{l_1}f(y_1)dy_1.
$$

\begin{theorem}
\label{thm_hom_tensor}
Let the tensor  $C(y_1)$ satisfy the symmetry and positive conditions \eqref{C_tens}, then the tensor
\begin{equation}
\label{C_hom}
\overline{C}=\langle C(y_1)\rangle-\langle C_1(y_1)\mathcal{C}^{-1}(y_1)C^1(y_1)\rangle+\langle C_1(y_1)\mathcal{C}^{-1}(y_1)\rangle\langle \mathcal{C}^{-1}(y_1)\rangle^{-1} \langle \mathcal{C}^{-1}(y_1) C^1(y_1) \rangle.
\end{equation}
is positive-definite and satisfy the minor and major symmetrical conditions, similar to \eqref{C_tens}.
\end{theorem}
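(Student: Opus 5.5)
The plan is to prove the symmetries of $\overline{C}$ directly from Statement~\ref{st_C1_Cup1_symmetry}. For positive definiteness, I will identify the quadratic form $\overline{C}\xi:\xi$ with the minimum of a one-dimensional cell energy, which is the classical variational characterization of laminates.

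\emph{Symmetries.} Write the three terms of \eqref{C_hom} in indices. The middle term is
$$(C_1\mC^{-1}C^1)_{ijkl}=C_{ijp1}(\mC^{-1})_{pq}C_{q1kl}.$$
The last term is
$$\langle C_{ijp1}(\mC^{-1})_{pr}\rangle\,(\langle\mC^{-1}\rangle^{-1})_{rs}\,\langle(\mC^{-1})_{sq}C_{q1kl}\rangle .$$

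\begin{itemize}
\item Minor symmetry in $(i,j)$ follows from the symmetry of $C_1$ in \eqref{C1_Cup1_symmetry}.
\item Minor symmetry in $(k,l)$ follows from the symmetry of $C^1$ in \eqref{C1_Cup1_symmetry}.
\item For the major symmetry $\overline{C}_{ijkl}=\overline{C}_{klij}$, use \eqref{C1_T_permut}, i.e.\ $C_{ijp1}=C_{p1ij}$. Together with the symmetry of $\mC$, $\mC^{-1}$, $\langle\mC^{-1}\rangle$ and $\langle\mC^{-1}\rangle^{-1}$, interchanging $(ij)\leftrightarrow(kl)$ simply transposes each product.
\item The matrix $\mC(y_1)$ is invertible and $\langle\mC^{-1}\rangle$ is positive definite by \eqref{matr_C_posit}, so all inverses make sense.
\end{itemize}

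\emph{Variational identity.} Fix a symmetric $\xi$. For $w\in L_2(0,l_1;\mathbb{R}^2)$ with $\langle w\rangle=0$, set $\eta(y_1)=\tfrac12(w\otimes e_1+e_1\otimes w)$, which is the symmetrized gradient of a periodic displacement depending on $y_1$ only. Consider
$$E(w)=\langle C(\xi+\eta):(\xi+\eta)\rangle=\langle C\xi:\xi\rangle+2\langle (C_1^{T}\xi)\cdot w\rangle+\langle \mC w\cdot w\rangle .$$
Here $(C_1^T\xi)_k=C_{ijk1}\xi_{ij}$; the reduction of the cross and quadratic terms uses the minor symmetries.

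This is a strictly convex quadratic functional under a linear constraint. A Lagrange multiplier $\lambda\in\mathbb{R}^2$ gives the pointwise equation $\mC w+C^1\xi=\lambda$, so $w=\mC^{-1}(\lambda-C^1\xi)$. The constraint $\langle w\rangle=0$ then fixes
$$\lambda=\langle\mC^{-1}\rangle^{-1}\langle\mC^{-1}C^1\rangle\xi .$$
Substituting back, the minimal value should come out as exactly $\overline{C}\xi:\xi$ from \eqref{C_hom}. This algebraic check is the main place where care with index order is needed. The obstacle I expect is matching the sign and transpose of the cross term with $\langle C_1\mC^{-1}\rangle$ versus $\langle\mC^{-1}C^1\rangle$, which \eqref{C1_T_permut} resolves.

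\emph{Positivity.} By \eqref{C_tens} applied pointwise to the symmetric tensor $\xi+\eta$,
$$\overline{C}\xi:\xi=\min_{\langle w\rangle=0}E(w)\ge A\langle|\xi+\eta|^2\rangle .$$
By Jensen's inequality (or Cauchy--Schwarz), $\langle|\xi+\eta|^2\rangle\ge|\xi+\langle\eta\rangle|^2$. Since $\langle\eta\rangle=0$, this gives
$$\overline{C}\xi:\xi\ge A|\xi|^2,$$
with the same constant $A$.

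The only technical points are the following.
\begin{itemize}
\item The minimizer exists and is given by the formula above. This holds because $E$ is coercive by \eqref{matr_C_posit}, and $\mC^{-1}$ is bounded since $C$ is piecewise continuous with the uniform bound \eqref{C_tens}.
\item Positivity is understood on symmetric $\xi$, exactly as in \eqref{C_tens}.
\end{itemize}
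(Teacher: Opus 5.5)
Your proof is correct. For the symmetries it is the paper's argument. For positive definiteness you take a different route, and it is the sturdier one.

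The paper uses the same ansatz $\eta=\tfrac12(w\otimes e_1+e_1\otimes w)$, but minimizes pointwise over a constant $w$. That yields only the Schur-complement inequality $C\ge C_1\mC^{-1}C^1$. The paper then bounds two pieces separately: $\langle C-C_1\mC^{-1}C^1\rangle$ and the correction term $\langle C_1\mC^{-1}\rangle\langle\mC^{-1}\rangle^{-1}\langle\mC^{-1}C^1\rangle$. Both pieces are only semidefinite on symmetric $\xi$:
\begin{itemize}
\item The first vanishes identically on $\xi=\tfrac12(a\otimes e_1+e_1\otimes a)$.
\item The second equals $v^T\langle\mC^{-1}\rangle^{-1}v$ with $v=\langle\mC^{-1}C^1\xi\rangle\in\mathbb{R}^2$. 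Since $\xi\mapsto v$ maps a three-dimensional space into $\mathbb{R}^2$, it vanishes on a nontrivial kernel.
\end{itemize}
So the paper's closing bound ``$\ge A$'' does not follow as written. To finish along that route one needs the extra observation that the two kernels meet only at $0$: on $\xi=\tfrac12(a\otimes e_1+e_1\otimes a)$ one has $v=a$. Even then, the resulting constant is not obviously $A$.

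Your minimization over zero-mean fields $w(y_1)$ avoids this. The Lagrange multiplier produces the correction term automatically, the whole form $\overline{C}\xi:\xi$ becomes a single cell energy, and Jensen gives $\overline{C}\xi:\xi\ge A|\xi|^2$ with the original constant.

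Two small remarks. First, for the lower bound you only need the identity $\overline{C}\xi:\xi=E(w^*)$ at the explicit admissible $w^*$; you do not need that it is a minimum. Second, $w^*=\mC^{-1}(\lambda-C^1\xi)=-\mathcal{A}(y_1)\xi$, so the optimal field is exactly the cell solution \eqref{tensor_A_big}. This ties your variational identity to the corrector in Theorem~\ref{thm_hom_eq}.
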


\begin{definition}
We call the tensor \eqref{C_hom} the homogenized tensor.
\end{definition}
This tensor coincides with the same homogenized tensor from \cite{brito-santana_dispersive_2015} (formula (37)). The proof of the theorem \ref{thm_hom_tensor} is given in Appendix \ref{proof_thm_hom_tensor}.

Now we can formulate the homogenized theorem. We start from the homogenized equation.
\begin{definition}
We call the problem 
\begin{gather}
\label{u0_hom_eq}
\nabla_x\cdot  {\bsigma^0}(x,\,t) -\langle \rho(y_1)\rangle\frac{\partial^2}{\partial t^2}{\bf u}^0(x,\,t)=0,\quad
\bsigma^0(x,\,t)=\frac{1}{2} \overline{C}: (\nabla\otimes \bu^0+(\nabla\otimes \bu^0)^T),\\
\nonumber
\bu^0(x,\,0)=\bfv(x),\,\bu^0_t(x,\,0)=\bg(x).
\end{gather}
the homogenized problem. The equation \eqref{u0_hom_eq} we call the homogenized equation.

\end{definition}

Let us define the following tensor
\begin{equation}
\label{tensor_A_big}
\mathcal{A}(y_1)=\mathcal{C}^{-1}(y_1)\Bigl(C^1(y_1)-\langle \mathcal{C}^{-1}(y_1)\rangle^{-1} \langle \mathcal{C}^{-1}(y_1) C^1(y_1)\Bigr).
\end{equation}
We call the tensor $\mathcal{A}(y_1)$ the solution of the cell problem and it has the zero mean $\langle \mathcal{A}(y_1)\rangle=0$.  Note that  $(-\mathcal{A})$ equals to the tensor $N^1$ from \cite{brito-santana_dispersive_2015} (formula (A4)).

\begin{theorem}[{\bf Homogenization theorem}]
\label{thm_hom_eq}
Let function $\bu^0(x,\,t)$ be a solution of the equation \eqref{u0_hom_eq}, then there exists a bounded function $\bu_2(x,\,x_1/\varepsilon,\,t)$ such that the function 
\begin{equation}
\label{u_as_sol_main}
\bu_{as}(x,\,t)=\bu^0(x,\,t)-\varepsilon\left(\int\limits_{0}^{x_1/\varepsilon}\mathcal{A}(\eta)d\eta\right):(\nabla_x \otimes \bu^0(x,\,t))+\varepsilon^2\bu_2(x,\,\frac{x_1}{\varepsilon},\,t),
\end{equation}
satisfies the initial equation (\ref{eq1}) and initial conditions \eqref{init_cond} with the residue $O(\varepsilon)$.

\end{theorem}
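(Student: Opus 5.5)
The plan is a two-scale substitution. Write $\bu_{as}=\bFi(x,y_1,t)|_{y_1=x_1/\varepsilon}$ and use $\partial_{x_1}\mapsto \partial_{x_1}+\varepsilon^{-1}\partial_{y_1}$. The operator $L^\varepsilon\bu=\nabla_x\cdot\bsigmaE-\rho\,\partial_t^2\bu$ then splits as
\[
L^\varepsilon=\varepsilon^{-2}L_0+\varepsilon^{-1}L_1+L_2,
\]
with
\begin{gather*}
L_0=\partial_{y_1}\bigl(\mC(y_1)\partial_{y_1}\,\cdot\,\bigr),\qquad
L_1=\partial_{y_1}\bigl(C^1(y_1):\nabla_x\,\cdot\,\bigr)+\nabla_x\cdot\bigl(C_1(y_1)\partial_{y_1}\,\cdot\,\bigr),\\
L_2=\nabla_x\cdot\bigl(C(y_1):\nabla_x\,\cdot\,\bigr)-\rho(y_1)\partial_t^2 .
\end{gather*}
Statement \ref{st_C1_Cup1_symmetry} lets me replace the symmetrized gradient by the full gradient when it is contracted with $C$, $C_1$, $C^1$. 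I will verify the identity $\nabla_x\cdot\bsigmaE$ in the composite, read with the interface conditions \eqref{BC_interface}, as the statement that $\bFi$ is continuous in $y_1$ and that the flux $\mC\partial_{y_1}\bFi+\varepsilon C^1:\nabla_x\bFi$ is continuous across $y_1=\gamma l_1$. All $y_1$-equations below are therefore understood in the weak sense on the periodic cell.

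The next step is to match the orders.

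\emph{Order $\varepsilon^{-2}$.} We have $L_0\bu^0=0$ trivially.

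\emph{Order $\varepsilon^{-1}$.} Set $\bu_1=-N(y_1):\nabla_x\bu^0$ with $N(y_1)=\int_0^{y_1}\mathcal A(\eta)\,d\eta$. This term is periodic because $\langle\mathcal A\rangle=0$. Using $\partial_{y_1}N=\mathcal A$, we get
\[
\mC\,\partial_{y_1}\bu_1+C^1:\nabla_x\bu^0=\bigl(C^1-\mC\mathcal A\bigr):\nabla_x\bu^0=\langle\mC^{-1}\rangle^{-1}\langle\mC^{-1}C^1\rangle:\nabla_x\bu^0 ,
\]
which is constant in $y_1$. Its $y_1$-derivative vanishes, so the $\varepsilon^{-1}$ term $L_0\bu_1+L_1\bu^0$ is zero and the flux condition holds at leading order. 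This is exactly the cell problem that defines $\mathcal A$ in \eqref{tensor_A_big}.

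\emph{Order $\varepsilon^{0}$.} The requirement is
\[
L_0\bu_2=-\bigl(L_1\bu_1+L_2\bu^0\bigr)=:F(x,y_1,t).
\]
The periodic cell problem for $L_0$ is solvable exactly when $\langle F\rangle=0$, by the Fredholm alternative. The operator $L_0$ is coercive modulo constants by \eqref{matr_C_posit}, and its kernel on periodic functions consists of the constants. The $\partial_{y_1}(\dots)$ parts of $F$ average to zero. The average condition therefore reduces to
\[
\nabla_x\cdot\Bigl(\bigl\langle C-C_1\mathcal A\bigr\rangle:\nabla_x\bu^0\Bigr)-\langle\rho\rangle\partial_t^2\bu^0=0 .
\]
A direct computation, which I expect to be the main algebraic obstacle, gives
\[
\langle C-C_1\mC^{-1}C^1\rangle+\langle C_1\mC^{-1}\rangle\langle\mC^{-1}\rangle^{-1}\langle\mC^{-1}C^1\rangle=\overline C .
\]
The solvability condition is therefore precisely the homogenized equation \eqref{u0_hom_eq}, and Theorem \ref{thm_hom_tensor} guarantees that it is well posed.

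With solvability in hand, I define $\bu_2=\mathcal N_2(y_1):\nabla_x\nabla_x\bu^0+\mathcal M(y_1)\partial_t^2\bu^0$, where $\mathcal N_2$ and $\mathcal M$ solve the corresponding periodic cell problems. I normalize them to have zero mean. They are bounded, being $W^{1,\infty}$ solutions of one-dimensional problems with piecewise smooth coefficients, so $\bu_2$ is bounded provided $\bu^0$ is smooth. The residual $L^\varepsilon\bu_{as}$ then contains only the terms $\varepsilon(L_1\bu_2+L_2\bu_1)$ and $\varepsilon^2L_2\bu_2$, so it is $O(\varepsilon)$.

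For the initial data, $\bu_{as}(x,0)-\bfv=\varepsilon\bu_1+\varepsilon^2\bu_2$ with $\bu_1,\bu_2$ bounded, so the discrepancy is $O(\varepsilon)$. The same holds for $\partial_t\bu_{as}$ against $\bg$, because $\bu^0$ carries the exact data. The interface conditions are satisfied up to $O(\varepsilon)$ by construction, since each cell problem was solved in the weak (transmission) sense.

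The delicate points are the following:
\begin{itemize}
\item checking the tensor identity above using the symmetries \eqref{C1_Cup1_symmetry} and \eqref{C1_T_permut};
\item making the interface flux bookkeeping consistent at each order.
\end{itemize}
In the paper's language, this corresponds to taking $\bchi(y_1,p)=1-i\varepsilon N(y_1)p+O(\varepsilon^2)$.
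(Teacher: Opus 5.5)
Your proposal is correct, but it follows the classical two-scale route rather than the paper's.

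\textbf{Your route.} You substitute $\bu^0+\varepsilon\bu_1+\varepsilon^2\bu_2$ directly in physical space. The order-$\varepsilon^{-1}$ cell problem produces $\mathcal{A}$. Your check that $\mC\partial_{y_1}\bu_1+C^1:\nabla_x\bu^0=\langle\mC^{-1}\rangle^{-1}\langle\mC^{-1}C^1\rangle:\nabla_x\bu^0$ is constant in $y_1$ is exactly right, and it also gives leading-order traction continuity. The homogenized equation then appears as the Fredholm condition at order $\varepsilon^0$. The identity $\langle C-C_1\mathcal{A}\rangle=\overline{C}$ follows in one line from \eqref{tensor_A_big}, so it is not really an obstacle.

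\textbf{The paper's route.} The paper Fourier-transforms in $x$ and posits $\bu=\bchi(x_1/\varepsilon,-i\nabla_x)v(x,t)$, with $v$ solving a constant-coefficient equation. It then expands the resulting $p$-dependent spectral problem on the cell. The solvability condition for $\bchi_2$ is the $2\times2$ eigenproblem \eqref{spec_prob}, so $\bu^0$ comes out already split into the modes $\bchi_0^s(-i\nabla_x)v^s$. The residual is controlled by the argument that an exact eigenpair would give an exact solution.

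\textbf{What each buys.} Your route is more elementary. It applies to any sufficiently smooth solution of \eqref{u0_hom_eq} without passing through the modal representation. It also makes explicit both the interface bookkeeping and the regularity the $O(\varepsilon)$ claim actually needs, namely bounded derivatives of $\bu^0$ up to order four. The paper's route yields the dispersion and polarization structure $(\omega_0^s(p),\bchi_0^s(p))$. It also provides a natural framework for higher-order dispersive corrections.

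\textbf{One detail to fix.} The cell problems for $\mathcal N_2$ and $\mathcal M$ cannot be posed with the raw coefficients of $F$: individually they are unsolvable, because $\langle\rho\rangle\neq0$. Only their sum has zero mean, and that only after using the homogenized equation. Pose the $\mathcal M$ problem with right-hand side $(\rho-\langle\rho\rangle)I$, and move $\langle\rho\rangle\partial_t^2\bu^0=\nabla_x\cdot(\overline{C}:\nabla_x\bu^0)$ into the $\mathcal N_2$ problem. Alternatively, eliminate $\partial_t^2\bu^0$ altogether. In the paper this issue does not arise, because $\bchi_2$ is solved for each fixed $p$ with $\omega_0^2(p)$ in place of $-\partial_t^2$.
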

The proof of the theorem \ref{thm_hom_eq} is given in the Section \ref{sec_hom_thm_proof}.

Notice, that in \cite{penta_asymptotic_2017} the tensor of the 3-rd order appeared as a solution of the equation for the first correction, but the definition of the homogenized tensor is given as a combination of the tensors of 4-th order, contrary to \eqref{C_hom}.

\subsection{Structure of the solution to the homogenized problem.}
As we mentioned, our method provides the detailed structure of the homogenized problem and its solution. This structure is connected to the certain spectral problem. Let us define the matrix $M(p)$ with elements
\begin{equation}
\label{spec_prob_matrix}
M_{jk}(p)=p_i\overline{C}_{ijkl}p_l,
\end{equation}
where $\overline{C}_{ijkl}$ are the elements of the homogenized tensor \eqref{C_hom}.

\begin{statement}
\label{st_matr_M}
When $p\not=0$,  matrix $M(p)$ is positive definite and symmetric for any $p\in\mathbb{R}^2$.
\end{statement}

\begin{proof}
Let us write down the quadratic form $f^TM(p)f$ for the arbitrary vector $f$. From the Theorem \ref{thm_hom_tensor}, we have
$$
f^TM(p)f=f_jM_{jk}(p)f_k=f_jp_i\overline{C}_{ijkl}p_l f_k\ge A |f_j p_i|^2=A |p|^2 |f|^2.
$$
From this follows the positive definitiveness of the matrix $M(p)$. The symmetry follows from the direct computation of the bilinear from $f^TM(p)v$ and the symmetry of the homogenized tensor $\overline{C}$.
\end{proof}

Let the vectors $\bchi^s_0(p)$ and $(\omega^s_0(p))^2$, $s=1,\,2$, be the solution of the following spectral problem 
\begin{equation}
\label{spec_prob}
\langle\rho(y_1)\rangle(\omega^s_0(p))^2\bchi^s_0(p)=M(p)\bchi^s_0(p),\quad |\bchi_0^s(p)|=1.
\end{equation}

\begin{remark}
The important property of the matrix $M(p)$ is that it is homogeneous of the order $2$
\begin{equation}
\label{matr_hom}
M(\lambda p)=\lambda^2 M(p).
\end{equation}
Thus we can define the unitary vector $\bn(\phi)=(\cos\phi,\,\sin\phi)$, where $\phi\in[0,\,2\pi]$ and represent the solution of the spectral problem \eqref{spec_prob} in the polar coordinates. Let $p=r\bn(\phi)$, then we have the following equalities $\omega^s(r\bn(\phi))=r \lambda^s(\phi)$ and $\bchi^s_0(r\bn(\phi))=\bchi^s_0(\phi)$, where we have the reduced spectral problem
\begin{equation}
\label{spec_prob_angle}
\frac{1}{\langle \rho(y_1)\rangle}M(\bn(\phi))\bchi^s_0(\phi)=(\lambda^s(\phi))^2\bchi^s_0(\phi),\quad \phi\in[0,\,2\pi],\,
|\bchi_0^s(\phi)|=1,\,s=1,\,2.
\end{equation}

\end{remark}

The vector-functions $\bchi_0^s(p)$ define the pseudo-differential operators $\bchi_0^s(-i\varepsilon\nabla_x)$ and the eigenvalues $\omega_0^s(p)$ define two scalar pseudo-differential equations with constant coefficients
\begin{gather}
\label{v_eq}
-v^s_{tt}(x,\,t)=(\omega^s_0(-i\nabla_x))^2v^s(x,\,t),\\
\nonumber
 v^s|_{t=0}=\langle \bchi_0^s(-i\nabla_x),\,\bfv(x)\rangle,\,
\frac{d}{dt}v^s|_{t=0}=\langle \bchi_0^s(-i\nabla_x),\,\bg(x)\rangle.
\end{gather}

\begin{definition}
We call the function 
\begin{equation}
\label{u0_princ_hom}
\bu^0(x,\,\,t)=\sum\limits_{s}\bchi_0^s(-i\nabla_x)v^s(x,\,t),
\end{equation}
the principal term of the asymptotic expansion. 
\end{definition}
The function \eqref{u0_princ_hom} describes the main effects of the homogenized wave and it is natural to call it the result of the homogenization procedure.

\begin{statement}
\label{st_princ_term_eq}
Function $\bu^0(x,\,t)$ defined in (\ref{u0_princ_hom}) satisfies the homogenized problem \eqref{u0_hom_eq}.
\end{statement}

\begin{proof}
The proof of this theorem is a straightforward calculations. We start from the time derivative, we have the following equality
\begin{gather}
\nonumber
-\langle \rho(y_1)\rangle\frac{\partial^2}{\partial t^2}\bchi_0^s(-i\nabla_x)v^s(x,\,t)=\langle \rho(y_1)\rangle\bchi_0^s(-i\nabla_x)\left(-\frac{\partial^2}{\partial t^2} v^s(x,\,t)\right)=\\
\label{left_side_eq}
=\langle \rho(y_1)\rangle\bchi_0^s(-i\nabla_x)\left(\left(\omega_0^s(-i\nabla_x)\right)^2v^s(x,\,t)\right)
=\langle \rho(y_1)\rangle\left(\omega_0^s(-i\nabla_x)\right)^2\bchi_0^s(-i\nabla_x)v^s(x,\,t).
\end{gather}
Now let us calculate the action of the spatial operator. First of all, we have the following equality
\begin{equation}
\label{stress_tensor}
\bsigmaE(x,\,t)=C(\frac{x_1}{\varepsilon}):(\nabla_x\otimes \bu^\varepsilon(x,\,t)).
\end{equation}
This follows from the symmetry property  (\ref{C_tens}) of tensor: we can rewrite $\bsigmaE$ as follows
$$
\bsigmaE(x,\,t)=\frac{1}{2}C_{ijkl}(\frac{x_1}{\varepsilon})\Bigl(\frac{\partial u_k^\varepsilon}{\partial x_l}+\frac{\partial u_l^\varepsilon}{\partial x_k}\Bigr)= 
C_{ijkl}(\frac{x_1}{\varepsilon})\frac{\partial u_k^\epsilon}{\partial x_l}.
$$
Here we used the conventional summation over repeating indices. 

After that we have to calculate the action, using the theory of the pseudo-differential operators. Following Appendix \ref{PsiDO}, we use the Fourier transform
\begin{equation}
\label{Fourier_trans}
\hat{v}(p,\,t)\equiv F[v](p,\,t)=\frac{1}{2\pi}\int\limits_{\mathbb{R}^2}v(x,\,t)e^{-i p\cdot x}dx,\quad
v(x,\,t)\equiv F^{-1}[\hat{v}](x,\,t)=\frac{1}{2\pi}\int\limits_{\mathbb{R}^2}\hat{v}(p,\,t)e^{i p\cdot x}dp.
\end{equation}
The action of the (pseudo-)differential operator can be defined as follows
\begin{gather*}
(-i\nabla_x)\cdot \overline{C} :((-i\nabla_x)\otimes\bchi^s_0(-i\nabla_x))v^s(x,\,t)=F^{-1}[\bp \cdot \overline{C} :(\bp \otimes\bchi^s_0(\bp))\hat{v}^s(\bp,\,t)].
\end{gather*}
Now we use the the definition of the matrix \eqref{spec_prob_matrix} and fact that $((\omega_0^s(p))^2,\,\bchi_0^s(p))$  is the solution of the spectral problem (\ref{spec_prob}). We have
\begin{gather*}
F^{-1}[\bp \cdot \overline{C} :(\bp \otimes\bchi^s_0(\bp))\hat{v}^s(\bp,\,t)]=F^{-1}[M(\bp)\bchi^s_0(\bp)\hat{v}^s(\bp,\,t)]=F^{-1}[\langle \rho(y_1)\rangle(\omega_0^s(\bp))^2\bchi_0(\bp)\hat{v}^s(\bp,\,t)\rangle]=\\
=\langle \rho(y_1)\rangle(\omega_0^s(-i\nabla_x))^2\bchi_0(-i\nabla_x)v^s(x,\,t).
\end{gather*}
We see that for each  $s$, function $\bchi_0^s(-i\nabla_x)v^s(x,\,t)$ satisfies the equation \eqref{u0_hom_eq}. Since the equation is linear, the sum  (\ref{u0_princ_hom}) also satisfies the equation.	
\end{proof}

This leads to the different way of determination of the function $\bu^0(x,\,t)$: instead of the direct computation of the solution of the homogenized problem \eqref{u0_hom_eq}, one can solve the equations \eqref{v_eq} and then write down the sum \eqref{u0_princ_hom}. This leads to the Fourier transform representation of the function $\bu^0(x,\,t)$.

\begin{statement}
\label{st_u0_Fourier}
Let $\hat{\bfv}(p)$ and $\hat{\bg}(p)$ be the Fourier transform of the initial conditions \eqref{init_cond} and $\bn(\phi)=(\cos\phi,\,\sin\phi)$, then the solution of the problem \eqref{u0_hom_eq} has the form 
\begin{gather}
\label{u0_Fourier_polar}
\bu^0(x,\,t)=\frac{1}{2\pi}\int\limits_{0}^{2\pi}\int\limits_{0}^{+\infty}\left[\sum\limits_{s=1,\,2}\left(\langle\hat{\bfv}(r\bn(\phi)),\,\bchi_0^s(\phi)\rangle\cos(r t \lambda^s(\phi))+\right.\right.\\
\nonumber
\left.\left.+i\frac{\langle\hat{\bg}(r\bn(\phi)),\,\bchi_0^s(\phi)\rangle}{r \lambda^s(\phi)}\sin(rt\lambda^s(\phi))\right)\bchi_0^s(\phi)e^{i r \langle\bn(\phi), x\rangle}\right]rdrd\phi,
\end{gather}
where $(\lambda^s(\phi),\,\bchi_0^s(\phi))$ is the solution of the reduced spectral problem \eqref{spec_prob_angle}.

\end{statement}

\begin{proof} Following \eqref{u0_princ_hom} we need to calculate the action of the pseudo-differential operator $\bchi_0^s(-i\nabla_x)$ on the function $v^s(x,\,t)$. This can be done via the Fourier transform \eqref{Fourier_trans}
$$
\bchi_0^s(-i\nabla_x)v^s(x,\,t)=F^{-1}[\bchi^s_0(p)\hat{v}^s(p,\,t)](x)=\frac{1}{2\pi}\int\limits_{\mathbb{R}^2}\bchi^s_0(p)\hat{v}^s(p,\,t)e^{i p\cdot x}dp.
$$
Here we denoted $\hat{v}^s(p,\,t)$ the Fourier transform of the function $v^s(x,\,t)$.

The function $v^s(x,\,t)$ is the solution of the problem \eqref{v_eq}, which is the equation with constant coefficients. This leads to analytical representation of the Fourier transform
$$
\hat{v}^s(p,\,t)=\langle\hat{\bfv}(p),\,\bchi_0^s(p)\rangle\cos(t\omega_0^s(p))+i\frac{\langle\hat{\bg}(p),\,\bchi_0^s(p)\rangle}{\omega_0^s(p)}\sin(t\omega_0^s(p)).
$$
We substitute this representation into the Fourier integral and then pass to the polar coordinates $p=r\bn(\phi)$ and use \eqref{spec_prob_angle}.

\end{proof}

Function \eqref{u0_Fourier_polar} defines the general form of the solution. One can simplify this representation and calculate analytically the integral over $r\in[0,\,+\infty)$ if some additional assumptions about the form of the initial functions $\bfv(x)$ and $\bg(x)$ are given. Similar to ideas of \cite{dobr_sekerzh_volk_2009,  grushin_homogenization_2013, dobrokhotov_asymptotic_2016}, we use the following form of the initial conditions
\begin{equation}
\label{ist_sekr}
\bfv(x)=V(x) \bfv,\quad \bg(x)=V(x) \bg,\quad V(x)=\frac{A}{(1+|x|^2)^{3/2}},\quad \bfv,\,\bg\in\mathbb{R}^2.
\end{equation}
Here vectors $\bfv$ and $\bg$ are the constant vectors and the perturbation is described by the function $V(x)$. This function has very simple Fourier transform
\begin{equation}
\label{ist_seker_Fourier}
\hat{V}(p)=A e^{-|p|}.
\end{equation}

\begin{statement}
\label{st_u0_ist_seker}
Let initial functions are given by \eqref{ist_sekr}, then the solution of the problem \eqref{u0_hom_eq} has the form 
\begin{gather}
\label{u0_ist_seker}
\bu^0(x,\,t)=\frac{A}{2\pi}\int\limits_{0}^{2\pi}
\left[\sum\limits_{s=1,\,2}\left(-\langle \bfv,\,\bchi_0^s(\phi)\rangle \frac{(t\lambda^s(\phi))^2 +(i+\langle \bn(\phi),\,x\rangle)^2}{((t\lambda^s(\phi))^2-(i+\langle \bn(\phi),\,x\rangle)^2)^2}+\right.\right.\\
\nonumber
\left.\left.+i\frac{t \langle \bg,\,\bchi_0^s(\phi)}{(t\lambda^s(\phi))^2-(i+\langle \bn(\phi),\,x\rangle)^2}\right)\bchi_0^s(\phi)\right]d\phi,
\end{gather}
where $(\lambda^s(\phi),\,\bchi_0^s(\phi))$ is the solution of the reduced spectral problem \eqref{spec_prob_angle}.

\end{statement}

\begin{proof} The proof follows from the direct computation of the integrals over $r\in[0,\,+\infty)$:
$$
\int\limits_{0}^{+\infty}r \cos(r\alpha)A e^{-r} e^{i r\beta}dr=-A \frac{\alpha^2 +(i+\beta)^2}{(\alpha^2-(i+\beta)^2)^2},\quad
\int\limits_{0}^{+\infty} \sin(r\alpha)A e^{-r} e^{i r\beta}dr=A \frac{\alpha}{\alpha^2-(i+\beta)^2}.
$$
Now we identify $\alpha=t\lambda^s(\phi)$ and $\beta=\langle \bn(\phi),\,x\rangle$.
\end{proof}

In this case we can also present the simple case for the function $v^s(x,\,t)$.

\begin{corollary}
\label{cor_v_func_ist_seker}
Let initial functions are given by \eqref{ist_sekr}, then the solution of the problem \eqref{v_eq} is of the form
\begin{equation}
\label{v_ist_seker}
v^s(x,\,t)=\frac{A}{2\pi}\int\limits_{0}^{2\pi}
\left(-\langle \bfv,\,\bchi_0^s(\phi)\rangle \frac{(t\lambda^s(\phi))^2 +(i+\langle \bn(\phi),\,x\rangle)^2}{((t\lambda^s(\phi))^2-(i+\langle \bn(\phi),\,x\rangle)^2)^2}+i\frac{t \langle \bg,\,\bchi_0^s(\phi)}{(t\lambda^s(\phi))^2-(i+\langle \bn(\phi),\,x\rangle)^2}\right)d\phi.
\end{equation}
\end{corollary}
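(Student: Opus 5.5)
The plan is to write $v^s$ as an inverse Fourier integral, pass to polar coordinates in $p$, and evaluate the radial integral explicitly, mirroring the proof of Statement \ref{st_u0_ist_seker} but without the vector factor $\bchi_0^s(\phi)$. Since \eqref{v_eq} has constant coefficients, its Fourier transform in $x$ is the ordinary differential equation $-\hat v^s_{tt}=(\omega^s_0(p))^2\hat v^s$. The initial data are $\hat v^s(p,0)=\langle\hat\bfv(p),\bchi_0^s(p)\rangle$ and $\hat v^s_t(p,0)=\langle\hat\bg(p),\bchi_0^s(p)\rangle$. This gives the representation for $\hat v^s(p,t)$ already used in the proof of Statement \ref{st_u0_Fourier}.

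Next I insert the data \eqref{ist_sekr}. By \eqref{ist_seker_Fourier}, $\hat\bfv(p)=Ae^{-|p|}\bfv$ and $\hat\bg(p)=Ae^{-|p|}\bg$, so the scalar products reduce to $Ae^{-|p|}\langle\bfv,\bchi_0^s(p)\rangle$ and $Ae^{-|p|}\langle\bg,\bchi_0^s(p)\rangle$. I then write $v^s=F^{-1}[\hat v^s]$ with the normalization \eqref{Fourier_trans} and pass to $p=r\bn(\phi)$. By the remark on homogeneity \eqref{matr_hom}, $\bchi_0^s(r\bn(\phi))=\bchi_0^s(\phi)$ and $\omega_0^s(r\bn(\phi))=r\lambda^s(\phi)$. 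Hence all $\phi$-dependence except the exponential factors out of the $r$-integral, and $e^{ip\cdot x}=e^{ir\langle\bn(\phi),x\rangle}$.

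The radial integrals are the two already computed in the proof of Statement \ref{st_u0_ist_seker}, with $\alpha=t\lambda^s(\phi)$ and $\beta=\langle\bn(\phi),x\rangle$. For the first term, $\int_0^\infty r\cos(r\alpha)e^{-r}e^{ir\beta}dr$ follows from $\int_0^\infty re^{-r(1-i\beta\mp i\alpha)}dr=(1-i\beta\mp i\alpha)^{-2}$ by taking the half-sum. For the second term, the Jacobian $r$ cancels the $1/(r\lambda^s)$ in front of $\sin$, which leaves $\int_0^\infty\sin(r\alpha)e^{-r}e^{ir\beta}dr$. Collecting the prefactors $A/(2\pi)$ then yields \eqref{v_ist_seker}.

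The only delicate step is bookkeeping, not analysis. I will check the sign and factor conventions in the elementary integrals, using $(1-i\beta)^2=-(i+\beta)^2$, so that the denominators take the stated form $(t\lambda^s)^2-(i+\langle\bn,x\rangle)^2$. I will also confirm that the factor $i$ in front of the $\bg$-term matches the convention of Statement \ref{st_u0_Fourier}, with the same $1/(2\pi)$ normalization. Convergence is not an issue because of the factor $e^{-r}$. Finally, as a consistency check, applying $\bchi_0^s(-i\nabla_x)$ to this $v^s$ and summing over $s$ should reproduce \eqref{u0_ist_seker}.
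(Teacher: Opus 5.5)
Your route is the paper's: the corollary is the radial computation from the proof of Statement \ref{st_u0_ist_seker} with the factor $\bchi_0^s(\phi)$ dropped. Most of your ingredients check out. The identity $\hat V=Ae^{-|p|}$ is consistent with the $1/(2\pi)$ normalization \eqref{Fourier_trans}, and the polar substitution via \eqref{matr_hom} is fine. Both radial integrals are correct, including $(1-i\beta)^2=-(i+\beta)^2$, so the $\bfv$-term comes out exactly as stated.

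The step that fails is your claim that solving the transformed ODE ``gives the representation already used in the proof of Statement \ref{st_u0_Fourier}.'' Take $\hat v^s_{tt}+(\omega_0^s)^2\hat v^s=0$ with $\hat v^s(p,0)=\langle\hat\bfv,\bchi_0^s\rangle$ and $\hat v^s_t(p,0)=\langle\hat\bg,\bchi_0^s\rangle$. Its solution is $\langle\hat\bfv,\bchi_0^s\rangle\cos(t\omega_0^s)+\langle\hat\bg,\bchi_0^s\rangle\sin(t\omega_0^s)/\omega_0^s$, with no factor $i$. The paper's version has $\hat v^s_t(p,0)=i\langle\hat\bg,\bchi_0^s\rangle$, so it violates the initial condition. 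The check of the factor $i$ that you defer to the end therefore cannot succeed. Done correctly, your computation proves \eqref{v_ist_seker} with the $i$ in front of the $\bg$-term deleted, and that corrected formula is what should be stated. The same spurious $i$ also appears in \eqref{u0_Fourier_polar} and \eqref{u0_ist_seker}.

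You can confirm that the printed formula is wrong by checking the initial data directly. The $\bfv$-term is even in $t$ and the $\bg$-term vanishes at $t=0$. Setting $t=0$ and differentiating once therefore gives
\[
v^s(x,0)=\frac{A}{2\pi}\int_0^{2\pi}\frac{\langle\bfv,\bchi_0^s(\phi)\rangle}{-(i+\langle\bn(\phi),x\rangle)^2}\,d\phi,\qquad
\partial_t v^s(x,0)=\frac{iA}{2\pi}\int_0^{2\pi}\frac{\langle\bg,\bchi_0^s(\phi)\rangle}{-(i+\langle\bn(\phi),x\rangle)^2}\,d\phi .
\]
The first expression is indeed $\langle\bchi_0^s(-i\nabla_x),V\bfv\rangle$; it is your own radial integral with $\alpha=0$. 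Since \eqref{v_eq} treats $\bfv$ and $\bg$ identically, the second must be the first with $\bfv$ replaced by $\bg$. Instead it equals $i\,\langle\bchi_0^s(-i\nabla_x),V\bg\rangle$. So do not adjust the bookkeeping to reproduce the printed $i$. Conclude with \eqref{v_ist_seker} without that factor, and state the discrepancy explicitly.
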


The approach, based on the Statements \ref{st_u0_Fourier} and \ref{st_u0_ist_seker} can be  implemented relatively simply. The matrix \eqref{spec_prob_matrix} is the matrix of the dimensions $2\times 2$ and the spectral problem \eqref{spec_prob} can be easily resolved. After that we can use the function \eqref{ist_sekr} and this leads to the calculation of the ordinary integral.

\section{Analysis of some numerical examples}
\label{sec_num_examp}

Here we present the numerical examples for the calculation of the effective homogenized tensor. We start from the example of the orthorombic material and then provide some numerical calculations for the monoclinic perturbation given by the rotation.

Following \cite{royer_elastic_2000} and Voigt notation, we introduce the index $\alpha$ in a following way
$$
(ij),\,(kl)\to \alpha:\,\quad (11)\to 1,\,(22)\to 2,\, (12)=(21)\to 6.
$$
We do not use other values of the indices $i,j,k,l$ due to our model.  Under this substitution of the pairs of indices we have the following
\begin{equation}
\label{index_subst}
C_{1111}=c_{11},\,C_{2222}=c_{22},\,C_{1122}=c_{12},\, C_{2121}=c_{66},\,C_{1121}=c_{16},\, C_{2122}=c_{26}.
\end{equation}
We do not mention other permutations of indices due to the symmetry.

We consider the orthorombic situation, see for example \cite{royer_elastic_2000} (formula (3.64)), and we need the following elements of the tensor 
\begin{equation}
\label{tensors_orthotrop}
c_{11}\not=0,\,c_{22}\not=0,\,c_{12}\not=0,\,c_{16}=0,\,c_{26}=0,\,c_{66}\not=0.
\end{equation}

To demonstrate our results for the orthorombic case we chose the materials presented in the table \ref{tbl1_base elements}, following \cite{bledzki_determination_1999, TMM}. 
\begin{table}[!h]
\begin{center}
\begin{tabular}{|c|c|c|c|c|c|}
\hline 
 & $E_1$ GPa & $E_2$ GPa & $G_{12}$ GPa & $\nu_{12}$ & $\rho$ kg/m$^3$\\ 
\hline 
Material A: glass/epoxy: & 39.15 & 12.16 & 5.11 & 0.296 & 1892 \\ 
\hline 
Material B:  kevlar-49 /epoxy:  & 73.0 & 5.0 & 2.2 & 0.35 & 1400 \\ 
\hline 
\end{tabular} 
\end{center}
\caption{Table with data of the materials for the composite material \cite{bledzki_determination_1999, TMM}. \label{tbl1_base elements}}
\end{table}

We use the following formulas to determine the elastic constants $c_{ij}$
\begin{equation}
\label{elstic_const_calc}
c_{11}=\frac{E_1}{\Delta}, \quad  c_{22}=\frac{E_2}{\Delta}, \quad c_{12}=\frac{\nu_{12}E_2}{\Delta}, \quad c_{66}=G_{12},\quad \nu_{21}=\nu_{12}\frac{E_2}{E_1},\quad \Delta=1-\nu_{12}\nu_{21}.
\end{equation}
The result of the calculation is given in the table  \ref{tbl2_elstic_const}.

\begin{table}[!h]
\begin{center}
\begin{tabular}{|c|c|c|c|c|}
\hline 
 & $c_{11}$ GPa & $c_{12}$ GPa & $c_{22}$ GPa & $c_{66}$ GPa \\ 
\hline 
Material A: glass/epoxy: & 40.2452 & 3.70005 & 12.5002 & 5.11 \\ 
\hline 
Material B:  kevlar-49 /epoxy: & 73.6177 & 1.76481 & 5.04231 & 2.2 \\ 
\hline 
\end{tabular} 
\end{center}
\caption{The elastic constants of the materials. \label{tbl2_elstic_const}}
\end{table}

The homogenized tensor  \eqref{C_hom} has the following non-zero components (taking into account the symmetries).
\begin{gather}
\label{C_hom_orthoromb}
\overline{C}_{1111}=\langle c^{-1}_{11}\rangle^{-1},\quad \overline{C}_{1122}=\langle\frac{c_{12}}{c_{11}}\rangle \langle c^{-1}_{11}\rangle^{-1},\quad \overline{C}_{1212}=\langle c^{-1}_{66}\rangle^{-1},\\
\nonumber
\overline{C}_{2222}=\langle c_{22}\rangle-\langle \frac{c_{12}^2}{c_{11}}\rangle+\langle \frac{c_{12}}{c_{11}}\rangle^2 \langle c^{-1}_{11}\rangle^{-1}.
\end{gather}

After evaluation of the effective homogenized tensor \eqref{C_hom},  we can construct, using \eqref{spec_prob_matrix},  the effective matrix $M(p)$ 
$$
M(p)=\begin{pmatrix}
p_1^2 \overline{C}_{1111}+p_2^2 \overline{C}_{1212} & p_1 p_2 \left(\overline{C}_{1122}+\overline{C}_{1212}\right)\\
p_1 p_2 \left(\overline{C}_{1122}+\overline{C}_{1212}\right) & p_1^2 \overline{C}_{1212} + p_2^2\overline{C}_{2222}
\end{pmatrix}.
$$

We put the length of the cell equals $\ell=1$ and we assume that the distribution of materials is following: for $y\in[0,\,\gamma \ell]$ we have material A and for $y\in(\gamma \ell,\,\ell]$ we have material B. In the table \ref{tbl3_hom_tensor_gamma} we provide the effective constants after homogenization procedure, corresponding to \eqref{C_hom_orthoromb}. 
\begin{table}[!h]
\begin{center}
\begin{tabular}{|c|c|c|c|c|c|}
\hline 
$\gamma$  & $c_{11}$ & $c_{12}$ & $c_{22}$ & $c_{66}$ & $\rho$ \\ 
\hline 
0.2  & 63.1453 & 2.37209 & 6.52111 & 2.48277 & 1498.4 \\ 
\hline 
0.4  & 55.2813 & 2.82812 & 8.00868 & 2.84896 & 1596.8 \\ 
\hline 
0.6  & 49.1592 & 3.18314 & 9.50211 & 3.34185 & 1695.2 \\ 
\hline 
0.8  & 44.2578 & 3.46736 & 10.9996 & 4.04098 & 1793.6 \\ 
\hline 
\end{tabular} 
\end{center}
\caption{Elastic constants  of the homogenized tensor and the density,  depending on the fraction of the volume $\gamma$. \label{tbl3_hom_tensor_gamma}}
\end{table}

On the figure \ref{pic_orthorombic_eigenvalues} we demonstrated the solution of the generalized spectral problem \eqref{spec_prob} for the value of the volume fraction $\gamma=0.2$ in the orthorombic case. We also demonstrated the eigenvalues $\lambda^2(\phi)$ of the reduced spectral problem \eqref{spec_prob_angle}.
\begin{figure}[!h]
\begin{center}
\begin{subfigure}{0.3\textwidth}
\includegraphics[scale=0.35]{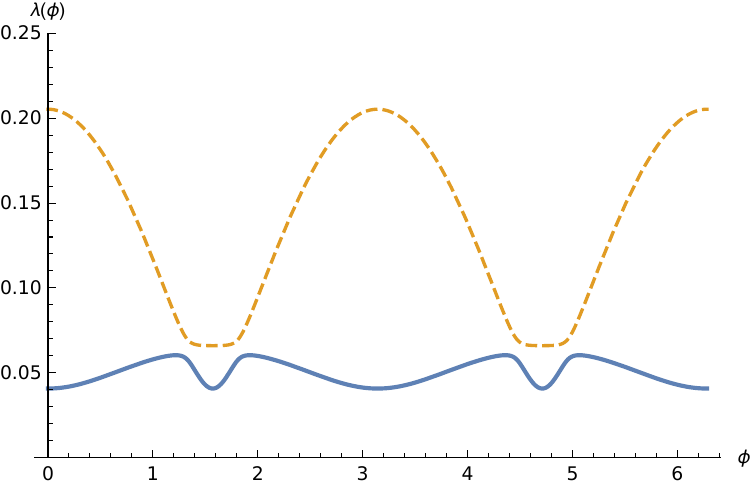}
\caption{\label{pic_orthorombic_eigenvalues:a}}
\end{subfigure}
\begin{subfigure}{0.3\textwidth}
\includegraphics[scale=0.35]{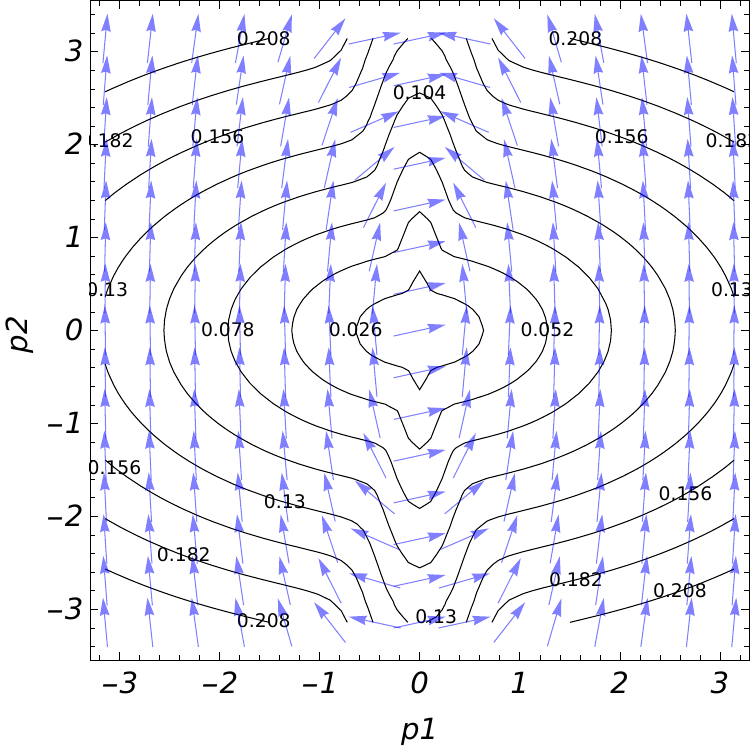}
\caption{\label{pic_orthorombic_eigenvalues:b}}
\end{subfigure}
\begin{subfigure}{0.3\textwidth}
\includegraphics[scale=0.35]{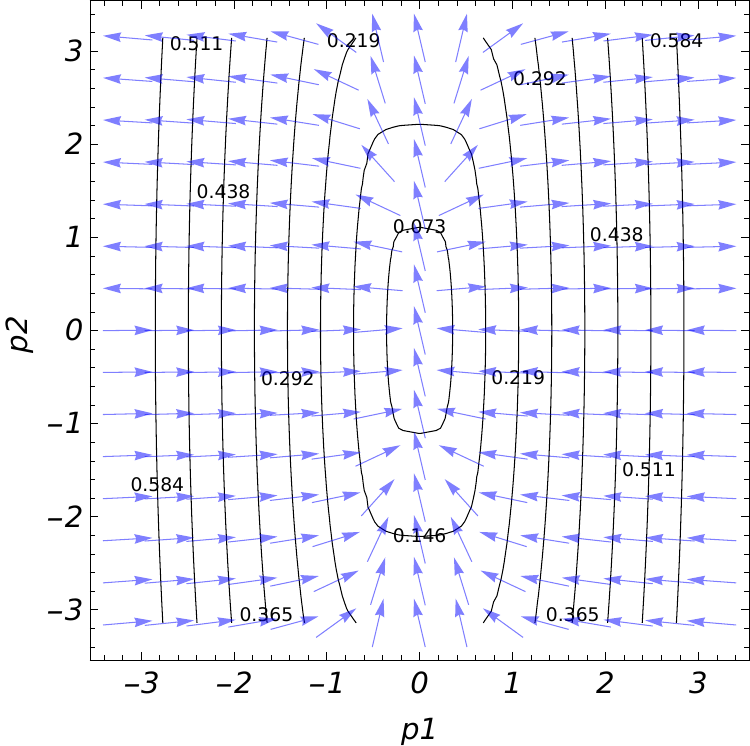}
\caption{\label{pic_orthorombic_eigenvalues:c}}
\end{subfigure}
\end{center}
\caption{Figure \ref{pic_orthorombic_eigenvalues:a} --- the eigenvalues $\lambda^{1,2}(\phi)$ of the reduced spectral problem \eqref{spec_prob_angle}. Figures \ref{pic_orthorombic_eigenvalues:b} and  \ref{pic_orthorombic_eigenvalues:c} --- the eigenvectors $\bchi_0(p)$ of the spectral problem \eqref{spec_prob} for the orthorombic case. Contour lines are the level lines of the eigenvalues $\omega_0(p)$ and the arrows show the vector-field $\bchi_0(p)$.  Figure  \ref{pic_orthorombic_eigenvalues:b} --- the first pair of the eigenvector and eigenvalue. Figure   \ref{pic_orthorombic_eigenvalues:c} --- the second pair. The value $\gamma=0.2$. \label{pic_orthorombic_eigenvalues} }
\end{figure}

After we calculated the solution of the spectral problem \eqref{spec_prob},  we can calculate the solution of the homogenized Cauchy problem \eqref{u0_hom_eq}. We chose the initial conditions as follows: initial perturbation is of the form $\bfv(x)=V(x)(1,\,0)^T$, where function $V(x)$ is given in \eqref{ist_sekr} and the initial velocity $\bg(x)=0$. We demonstrate the functions $v^s(x,\,t)$ defined by \eqref{v_ist_seker} on the figures \ref{pic_v1_gamma_02} and \ref{pic_v2_gamma_02}. We demonstrate the wave profiles for the value $\gamma=0.2$. We chose the time moment $t=60$. On the figure \ref{pic_wave_U1_gamma_02} we demonstrated the first component of the solution of the homogenized problem \eqref{u0_hom_eq}. On the figure \ref{pic_wave_U2_gamma_02} we demonstrated the second part of the solution. The solution of the problem \eqref{u0_hom_eq} was obtained by the direct numerical solution.
\begin{figure}[!h]
\begin{center}
\includegraphics[scale=0.5]{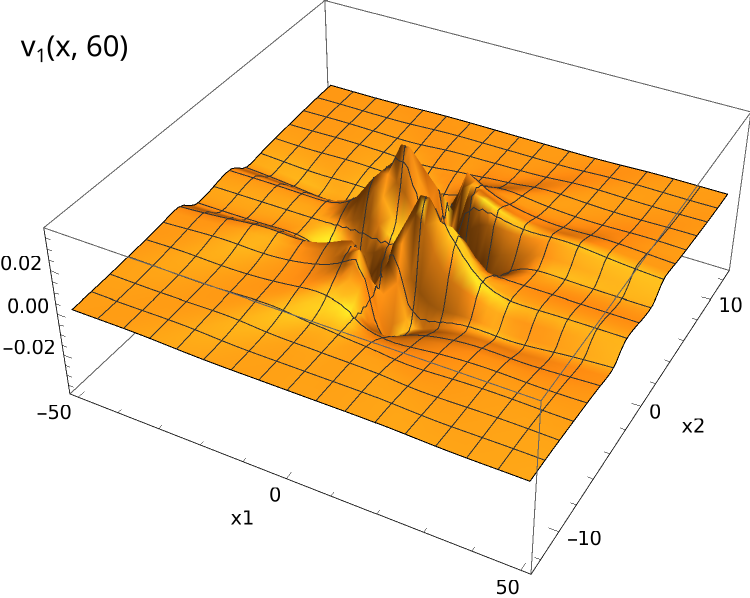}
\includegraphics[scale=0.65]{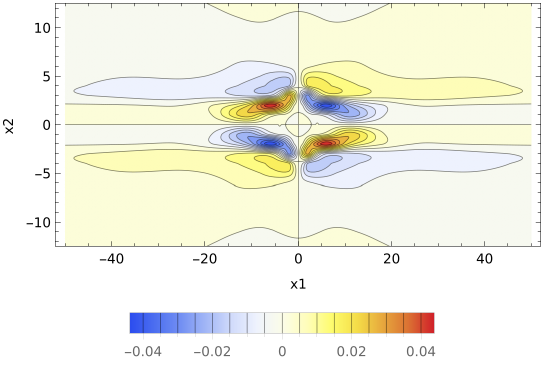}
\end{center}
\caption{The function $v^1(x,\,t)$, defined by \eqref{v_ist_seker}, corresponding to the first solution of the spectral problem \eqref{spec_prob}.  On the left we have the 3D plot of the function, on the right we have the level lines of the same function. The value $\gamma=0.2$, the time is $t=60$. \label{pic_v1_gamma_02} }
\end{figure}
\begin{figure}[!h]
\begin{center}
\includegraphics[scale=0.5]{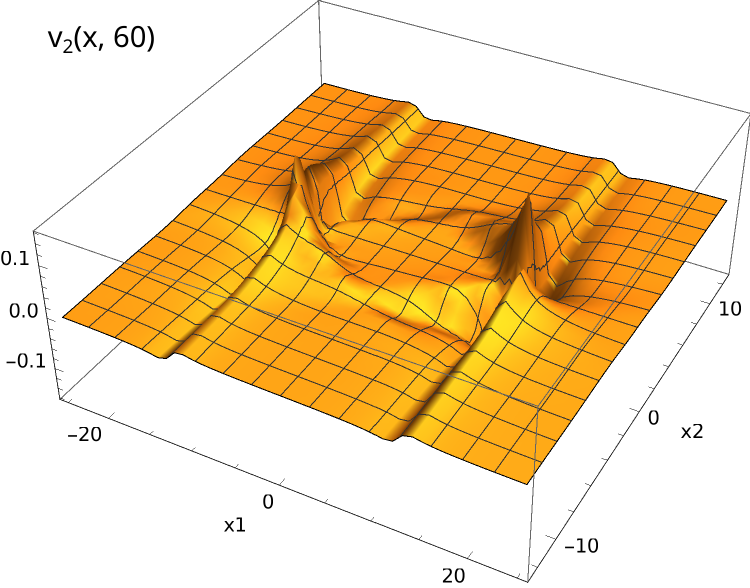}
\includegraphics[scale=0.65]{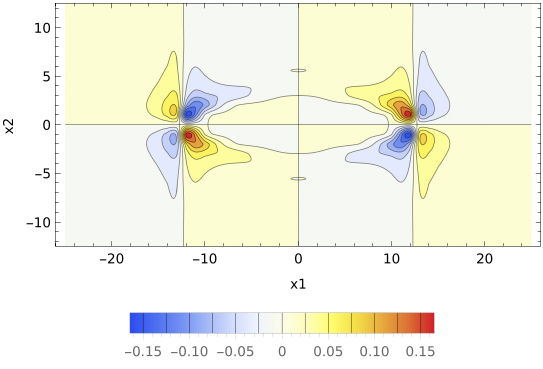}
\end{center}
\caption{The function $v^2(x,\,t)$, defined by \eqref{v_ist_seker}, corresponding to the second solution of the spectral problem \eqref{spec_prob}.  On the left we have the 3D plot of the function, on the right we have the level lines of the same function. The value $\gamma=0.2$, the time is $t=60$. \label{pic_v2_gamma_02} }
\end{figure}

\begin{figure}[!h]
\begin{center}
\includegraphics[scale=0.65]{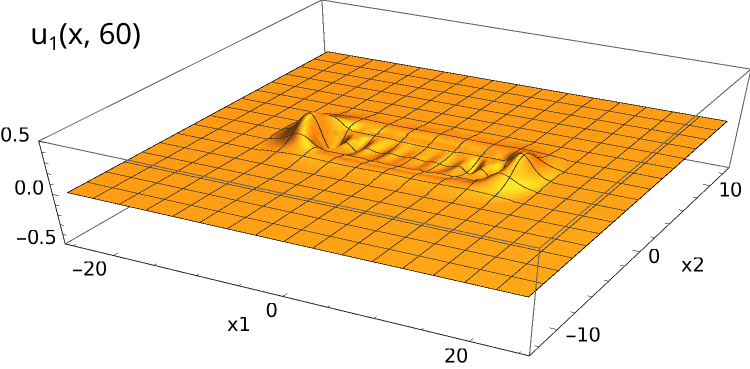}
\includegraphics[scale=0.65]{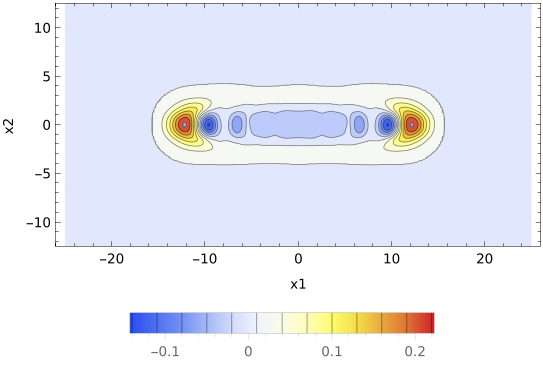}
\end{center}
\caption{The first component of the solution of the homogenized problem \eqref{u0_hom_eq}. On the left we have the 3D plot of the wave, on the right we have the level lines of the same wave. The value $\gamma=0.2$, the time is $t=60$. \label{pic_wave_U1_gamma_02} }
\end{figure}
\begin{figure}[!h]
\begin{center}
\includegraphics[scale=0.65]{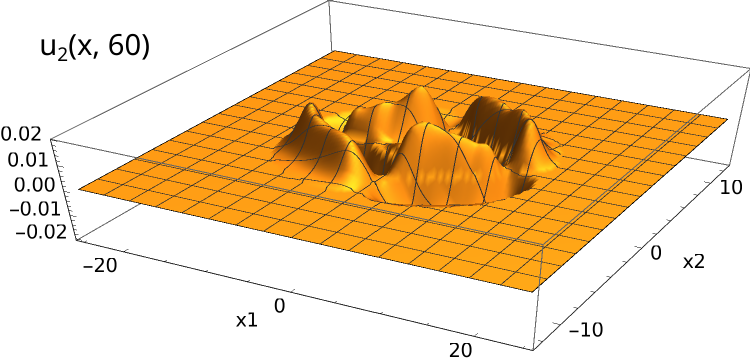}
\includegraphics[scale=0.65]{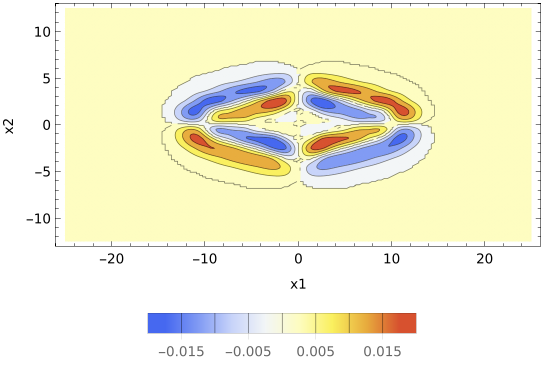}
\end{center}
\caption{The second component of the solution of the homogenized problem \eqref{u0_hom_eq}. On the left we have the 3D plot of the wave, on the right we have the level lines of the same wave. The value $\gamma=0.2$, the time is $t=60$. \label{pic_wave_U2_gamma_02} }
\end{figure}

\subsection{Monoclinic case. Rotation.}
The monoclinic case differs from the orthorombic case by $c_{16}\not=0$ and $c_{26}\not=0$. In this case we have the following formulas for the homogenized tensor \eqref{C_hom}.
Let us define the following values 
\begin{gather*}
\alpha=\left\langle \frac{c_{11}(y_1)}{d(y_1)}\right\rangle,\, \beta=\left\langle \frac{c_{66}(y_1)}{d(y_1)}\right\rangle,\, \gamma=-\left\langle \frac{c_{16}(y_1)}{d(y_1)}\right\rangle,\quad 
d(y_1)=c_{11}(y_1)c_{66}(y_1)-c_{16}^2(y_1).
\end{gather*}
Let us denote 
$$
P=\left\langle \frac{c_{12}(y_1)c_{16}(y_1)-c_{11}(y_1)c_{26}(y_1)}{d(y_1)}\right\rangle,\,
Q=\left\langle \frac{c_{16}(y_1)c_{26}(y_1)-c_{12}(y_1)c_{66}(y_1)}{d(y_1)}\right\rangle.
$$
Let $\Delta=\alpha\beta-\gamma^2$, then the components of the homogenized tensor are calculated by the following formulas
\begin{gather}
\label{hom_tens_monoclinic}
\overline{C}_{1111}=\frac{\alpha}{\Delta},\,\overline{C}_{1121}=-\frac{\gamma}{\Delta},\,\overline{C}_{2121}=\frac{\beta}{\Delta},\, \overline{C}_{1122}=\frac{\gamma P-\alpha Q}{\Delta},\,\overline{C}_{2221}=\frac{\gamma Q-\beta P}{\Delta},\\
\nonumber
\overline{C}_{2222}=\langle c_{22}(y_1)\rangle+\frac{\alpha Q^2-2\gamma P Q+\beta P^2}{\Delta}-\left\langle \frac{c_{26}^2c_{11}+c_{12}^2c_{66}-2c_{26}c_{12}c_{16}}{d}\right\rangle.
\end{gather}

In the case $c_{16}=c_{26}=0$ we have $\alpha=\langle c_{66}^{-1}\rangle$, $\beta=\langle c_{11}^{-1}\rangle$, $\gamma=0$, $\Delta=\alpha\beta$, $P=0$ and $Q=-\langle c_{12}/c_{11}\rangle$.
We see, that in this case, the formulas \eqref{hom_tens_monoclinic} coincide with \eqref{C_hom_orthoromb}.

As an example of the monoclinic tensor we consider the rotation, see for example \cite{tsai_theory_composites_design_2012} (formula (2.35)). Let us define the matrix
$$
T(\theta)=
\begin{pmatrix}
c^2 & s^2 & c s\\
s^2 & c^2 & - c s\\
-2cs & 2 cs & c^2-s^2
\end{pmatrix},\quad
c=\cos\theta,\,s=\sin\theta.
$$
Let us consider the matrix of the coefficients, corresponding to the orthorombic case
$$
c(y_1)=\begin{pmatrix}
c_{11}(y_1) & c_{12}(y_1) & 0\\
c_{12}(y_1) & c_{22}(y_1) & 0\\
0 & 0 & c_{66}(y_1)
\end{pmatrix},
$$
then application of the rotation matrix leads to the following monoclinic matrix of the coefficients
\begin{equation}
\label{monoclinic_rotation}
c_M(y_1)=T^{-T}(\theta)c(y_1) T^{-1}(\theta)
\end{equation}
Here $-T$ stands for the inverse and transpose matrix. 
The homogenization procedure is applied to the tensor $c_M(y_1)$.

As an example we provide the rotation of the orthorombic matrix, constructed above, by the angle $\theta=\pi/12$. We have the following values for the homogenized tensor $\overline{C}$ for the matrix $c_M(y_1)$ according to the formulas \eqref{hom_tens_monoclinic}.

\begin{table}[!h]
\begin{center}
\begin{tabular}{|c|c|c|c|c|c|c|}
\hline 
$\gamma$  & $c_{11}$ & $c_{12}$ & $c_{22}$ & $c_{66}$ & $c_{16}$ & $c_{26}$ \\ 
\hline 
0.2  & 55.7328 & 5.64414 & 6.94335 & 5.9176 & -13.109 & -1.27751  \\ 
\hline 
0.4  & 48.9719 & 5.45971 & 8.40035 & 5.72593 & -11.0417 & -1.17266 \\ 
\hline 
0.6  & 43.8232 & 5.33626 & 9.86142 & 5.75051 & -9.31275 & -1.06463 \\ 
\hline 
0.8  & 39.8507 & 5.26648 & 11.3277 & 6.02477 & -7.74734 & -0.939101 \\ 
\hline 
\end{tabular} 
\end{center}
\caption{The result of the homogenization procedure of the monoclinic tensor $c_M(y_1)$ after rotation procedure, depending on the different volume fraction $\gamma$. \label{tbl_monoclinic}}
\end{table}

On the picture \ref{pic_monoclinic_eigenvalues} we demonstrated the solution of the generalized spectral problem \eqref{spec_prob} for the value of the volume fraction $\gamma=0.2$ in the monoclinic case. We also demonstrated the eigenvalues $\lambda^2(\phi)$ of the reduced spectral problem \eqref{spec_prob_angle}.
\begin{figure}[!h]
\begin{center}
\begin{subfigure}{0.3\textwidth}
\includegraphics[scale=0.35]{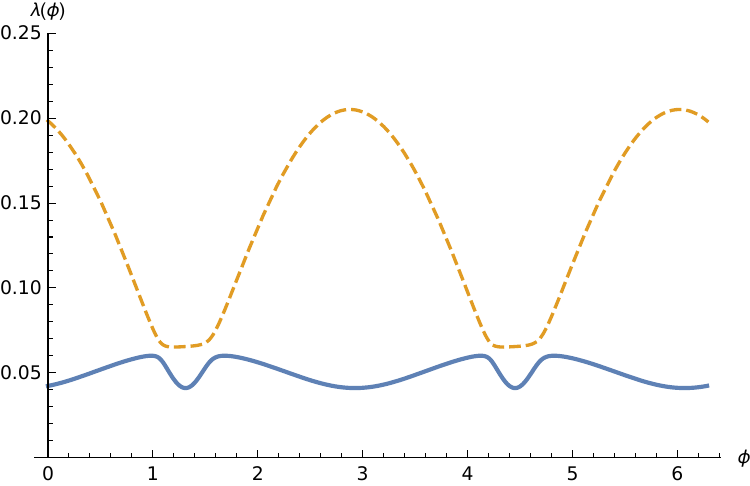}
\caption{\label{pic_monoclinic_eigenvalues:a}}
\end{subfigure}
\begin{subfigure}{0.3\textwidth}
\includegraphics[scale=0.35]{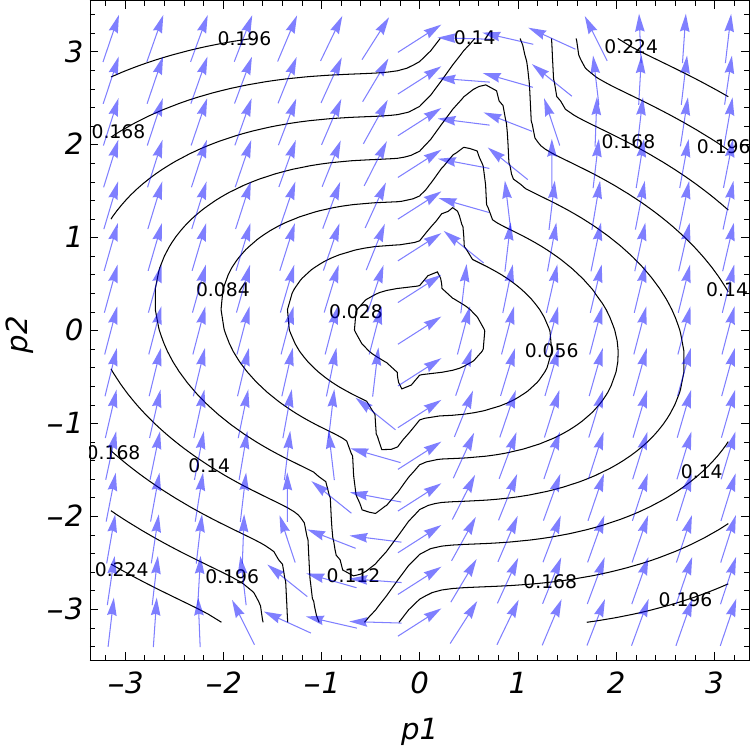}
\caption{\label{pic_monoclinic_eigenvalues:b}}
\end{subfigure}
\begin{subfigure}{0.3\textwidth}
\includegraphics[scale=0.35]{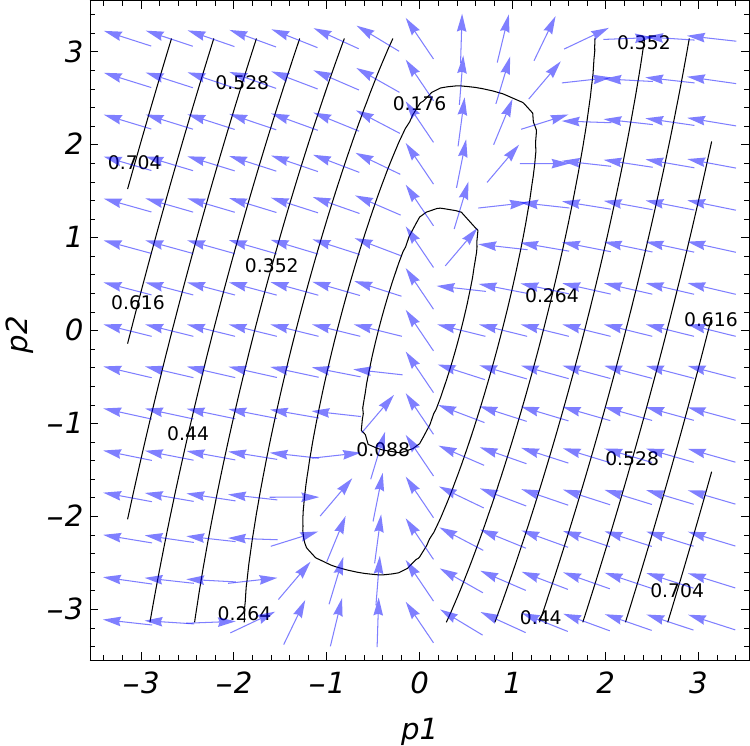}
\caption{\label{pic_monoclinic_eigenvalues:c}}
\end{subfigure}
\end{center}
\caption{The monoclinic case. Figure \ref{pic_monoclinic_eigenvalues:a} --- the eigenvalues $\lambda^{1,2}(\phi)$ of the reduced spectral problem \eqref{spec_prob_angle}. Figures \ref{pic_monoclinic_eigenvalues:b} and  \ref{pic_monoclinic_eigenvalues:c} --- the eigenvectors $\bchi_0(p)$ of the spectral problem \eqref{spec_prob} for the orthorombic case. Contour lines are the level lines of the eigenvalues $\omega_0(p)$ and the arrows show the vector-field $\bchi_0(p)$.  Figure  \ref{pic_monoclinic_eigenvalues:b} --- the first pair of the eigenvector and eigenvalue. Figure   \ref{pic_monoclinic_eigenvalues:c} --- the second pair. The value $\gamma=0.2$. \label{pic_monoclinic_eigenvalues} }
\end{figure}

Here we are presenting the results of the computation of the homogenized problem \eqref{u0_hom_eq} for the monoclinic case.
The parameters are the same: initial perturbation is of the form $\bfv(x)=V(x)(1,\,0)^T$ (function $V(x)$ is defined in \eqref{ist_sekr}) and the initial velocity $\bg(x)=0$, the value $\gamma=0.2$ and the time moment $t=60$. We demonstrate the functions $v^s(x,\,t)$ defined by \eqref{v_ist_seker} on the figures \ref{pic_monoclinic_v1_gamma_02} and \ref{pic_monoclinic_v2_gamma_02}. On the picture \ref{pic_wave_monoclinic_U1_gamma_02} we demonstrated the first component of the solution of the homogenized problem \eqref{u0_hom_eq}. On the picture \ref{pic_wave_monoclinic_U2_gamma_02} we demonstrated the second part of the solution. The solution of the problem obtained by direct numerical solution of the equation.
\begin{figure}[!h]
\begin{center}
\includegraphics[scale=0.5]{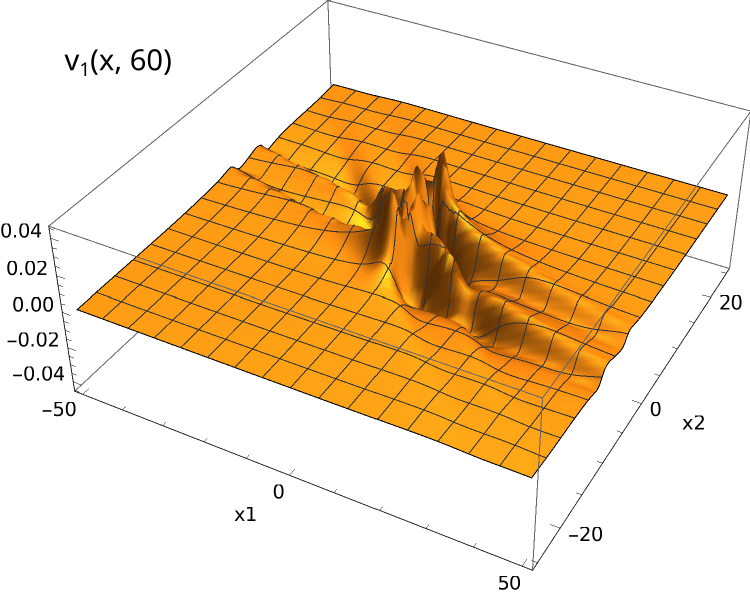}
\includegraphics[scale=0.65]{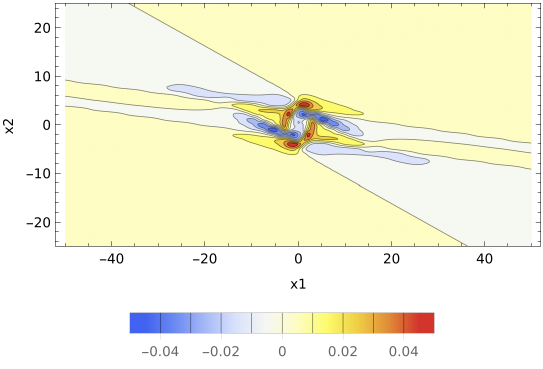}
\end{center}
\caption{The monoclinic case. The function $v^1(x,\,t)$, defined by \eqref{v_ist_seker}, corresponding to the first solution of the spectral problem \eqref{spec_prob}.  On the left we have the 3D plot of the function, on the right we have the level lines of the same function. The value $\gamma=0.2$, the time is $t=60$. \label{pic_monoclinic_v1_gamma_02} }
\end{figure}
\begin{figure}[!h]
\begin{center}
\includegraphics[scale=0.5]{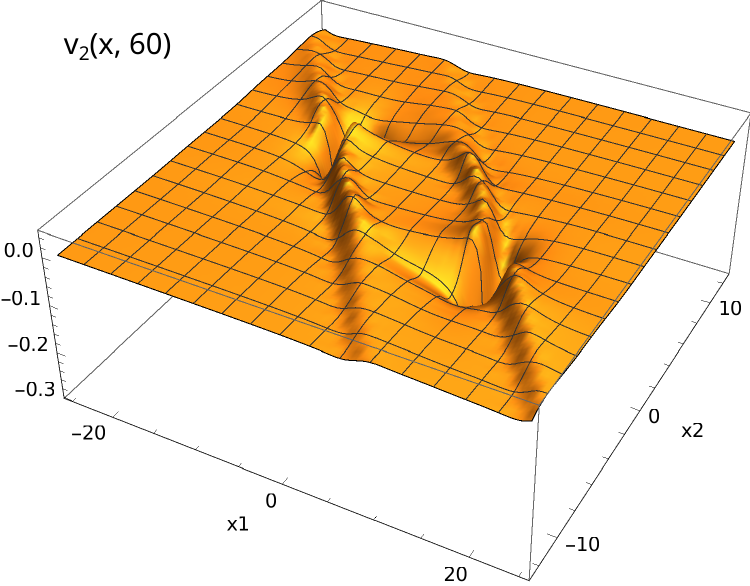}
\includegraphics[scale=0.65]{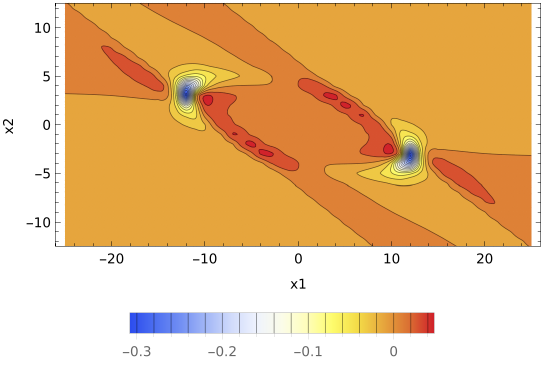}
\end{center}
\caption{The monoclonic case. The function $v^2(x,\,t)$, defined by \eqref{v_ist_seker}, corresponding to the second solution of the spectral problem \eqref{spec_prob}.  On the left we have the 3D plot of the function, on the right we have the level lines of the same function. The value $\gamma=0.2$, the time is $t=60$. \label{pic_monoclinic_v2_gamma_02} }
\end{figure}
\begin{figure}[!h]
\begin{center}
\includegraphics[scale=0.65]{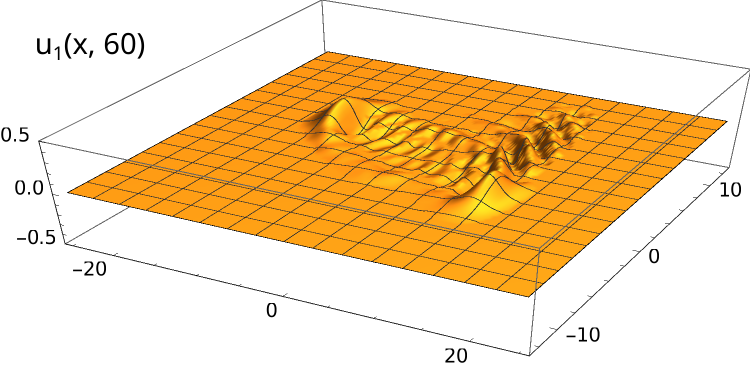}
\includegraphics[scale=0.65]{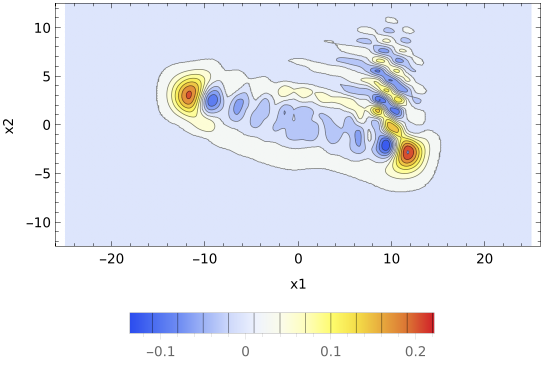}
\end{center}
\caption{The first component of the solution of the homogenized problem \eqref{u0_hom_eq} for the monoclinic case. On the left we have the 3D plot of the wave, on the right we have the level lines of the same wave. The value $\gamma=0.2$, the time is $t=60$. \label{pic_wave_monoclinic_U1_gamma_02} }
\end{figure}
\begin{figure}[!h]
\begin{center}
\includegraphics[scale=0.65]{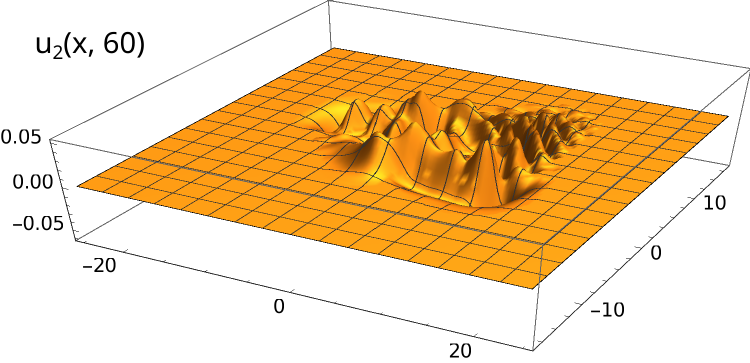}
\includegraphics[scale=0.65]{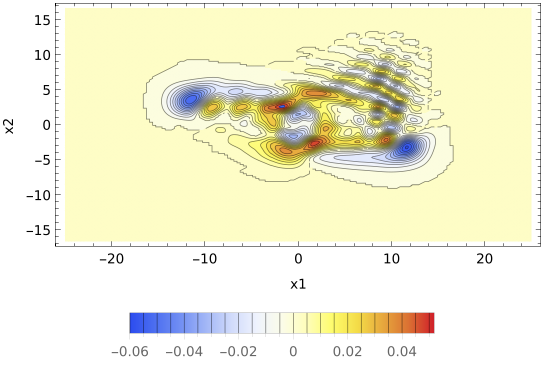}
\end{center}
\caption{The second component of the solution of the homogenized problem \eqref{u0_hom_eq} for the monoclinic case. On the left we have the 3D plot of the wave, on the right we have the level lines of the same wave. The value $\gamma=0.2$, the time is $t=60$. \label{pic_wave_monoclinic_U2_gamma_02} }
\end{figure}

\section{Discussion of results and conclusion.}
\label{conclus}

\paragraph{Interpretation of the effective coefficients.}
Table~\ref{tbl3_hom_tensor_gamma} shows a systematic and physically meaningful transition between the two constituent responses as the volume fraction $\gamma$ of material A increases. The longitudinal coefficient $\overline C_{1111}$ decreases monotonically from $63.1453$ GPa at $\gamma=0.2$ to $44.2578$ GPa at $\gamma=0.8$, i.e. by approximately $29.9\%$. This trend is consistent with the fact that material A is less stiff than material B in the $x_1$ direction. In contrast, $\overline C_{2222}$ and $\overline C_{1212}$ increase by approximately $68.7\%$ and $62.8\%$, respectively, over the same interval, reflecting the larger transverse and shear moduli of material A. The coupling coefficient $\overline C_{1122}$ also increases monotonically, from $2.37209$ to $3.46736$ GPa. Thus, changing $\gamma$ does not produce a uniform stiffening or softening: it redistributes the effective stiffness in a strongly direction-dependent manner.

The density varies linearly with $\gamma$, from $1498.4$ to $1793.6$ kg/m$^3$ in the tabulated range, as expected from the cell average $\langle\rho\rangle$. Consequently, the changes in the characteristic wave speeds are governed by the competition between effective stiffness and inertia. In the $x_1$-dominated response the simultaneous decrease of $\overline C_{1111}$ and increase of $\langle\rho\rangle$ tends to reduce the corresponding propagation speed as $\gamma$ grows, whereas the increase of $\overline C_{2222}$ and $\overline C_{1212}$ can compensate, or even reverse, this tendency for propagation and polarization involving the transverse and shear responses. This observation anticipates the angular dependence obtained from the spectral problem and emphasizes that the volume fraction acts as a genuine parameter for tuning anisotropic wave propagation.

The rotated example demonstrates that homogenization and material orientation interact non-trivially. The rotation itself is introduced through Eq.~\eqref{monoclinic_rotation}, while its direct effect on the homogenized elastic coefficients is quantified in Table~\ref{tbl_monoclinic}. For $\theta=\pi/12$, the effective tensor acquires the coupling coefficients $\overline C_{1112}$ and $\overline C_{2212}$ (reported in Voigt notation as $c_{16}$ and $c_{26}$), which are identically zero in the orthorhombic configuration. Their negative values are therefore a direct numerical measure of the loss of alignment between the material symmetry axes and the fixed coordinates. As $\gamma$ increases from $0.2$ to $0.8$, the magnitude of $c_{16}$ decreases from $13.109$ to $7.74734$ GPa (about $40.9\%$), while the magnitude of $c_{26}$ decreases from $1.27751$ to $0.939101$ GPa (about $26.5\%$). Thus, for this fixed rotation angle, increasing the fraction of material A weakens the rotation-induced coupling in the homogenized medium.

The spectral manifestation of this rotation-induced anisotropy is shown in Fig.~\ref{pic_monoclinic_eigenvalues}, where the angular dependence of the eigenvalues $\lambda^{1,2}(\phi)$ and the associated eigenvector fields are displayed for $\gamma=0.2$. In particular, the departure from the orthorhombic response can be appreciated by comparing Fig.~\ref{pic_monoclinic_eigenvalues} with Fig.~\ref{pic_orthorombic_eigenvalues}: the rotation modifies the directional structure of the spectral branches and the orientation of the polarization fields because the normal and shear components are now coupled through $c_{16}$ and $c_{26}$. Therefore, Fig.~\ref{pic_monoclinic_eigenvalues} should be read as the spectral counterpart of the effective coefficients listed in Table~\ref{tbl_monoclinic}, rather than as an independent numerical observation.

The diagonal and normal-coupling coefficients exhibit a different sensitivity. Over the same range, $c_{11}$ decreases by about $28.5\%$, whereas $c_{22}$ increases by about $63.1\%$. By contrast, $c_{66}$ changes only slightly between the endpoints (about $1.8\%$), although its dependence on $\gamma$ is weakly non-monotone. This contrast is useful because it shows that the rotated laminate can substantially modify longitudinal/transverse stiffness and normal--shear coupling without producing a comparably large net change in the effective shear coefficient. The volume fraction and the rotation angle therefore control distinct, but coupled, aspects of the effective anisotropy.

\paragraph{Spectral anisotropy and modal structure.}
Figure~\ref{pic_orthorombic_eigenvalues} provides the spectral counterpart of the effective coefficients. The two branches $\lambda^1(\phi)$ and $\lambda^2(\phi)$ remain separated and vary with the propagation angle, which is the expected signature of an anisotropic homogenized medium. The associated vector fields $\bchi_0^s(p)$ are not merely auxiliary quantities: they determine the polarization carried by each branch and, through formula~\eqref{v_ist_seker}, the amount of the initial disturbance projected onto each propagating mode. The symmetry visible in the orthorhombic plots is inherited from the absence of the coupling coefficients $c_{16}$ and $c_{26}$. Hence, the numerical spectral calculation confirms that the homogenization procedure preserves the principal material symmetries while replacing the rapidly layered medium by a direction-dependent effective propagation law.

Comparison of Figures~\ref{pic_orthorombic_eigenvalues} and~\ref{pic_monoclinic_eigenvalues} isolates the effect of the rotation while keeping the same constituents and the same volume fraction $\gamma=0.2$. The monoclinic coefficients introduce additional normal--shear coupling into $M(p)$; consequently, the angular dependence of the eigenvalues and the orientation of the polarization vectors are no longer constrained by the same reflection symmetries as in the orthorhombic case. In physical terms, the preferred directions of propagation and polarization rotate with respect to the coordinate axes. This is precisely the type of information that is difficult to infer from the entries of the effective tensor alone but becomes transparent in the spectral representation.

\paragraph{Wave-field reconstruction.}
Figures~\ref{pic_v1_gamma_02}--\ref{pic_wave_U2_gamma_02} clarify how the spectral decomposition is converted into the physical displacement field. Although the initial perturbation is polarized entirely along the first coordinate direction, both modal amplitudes $v^1$ and $v^2$ participate in the reconstruction because the eigenvectors depend on the propagation direction. The resulting field therefore cannot, in general, be interpreted as a single scalar pulse transported with one velocity. Instead, it is a superposition of two anisotropic elastic modes whose angularly dependent speeds and polarizations reshape the initially localized disturbance.

A particularly relevant feature is the appearance of a non-zero second displacement component despite the choice $\bfv(x)=V(x)(1,0)^T$. This is not generated by an imposed second-component initial displacement; it follows from the directional modal projection and the off-diagonal terms of the effective acoustic matrix $M(p)$. In this sense, the numerical solution illustrates an important consequence of anisotropy: propagation can redistribute displacement between components even when the excitation is initially aligned with a coordinate axis. The agreement between this reconstructed modal picture and the direct numerical solution of the homogenized equation provides a consistency check for the operator-separation representation used in the analysis.

The monoclinic wave fields in Figures~\ref{pic_monoclinic_v1_gamma_02}--\ref{pic_wave_monoclinic_U2_gamma_02} should be read together with the rotated spectral data rather than as a second independent example. The non-zero $c_{16}$ and $c_{26}$ terms alter both the angular phase speeds and the eigenvector orientations; therefore they modify simultaneously the geometry of the propagating fronts and the partition of the response between $u_1^0$ and $u_2^0$. Since the initial data, volume fraction and observation time are unchanged, differences with the orthorhombic fields can be attributed to the rotation-induced constitutive coupling rather than to a different excitation.

This comparison highlights the main numerical message of the section. The effective tensor obtained by the operator separation of variables is not only a static set of averaged coefficients: through the spectral problem it retains the directional information needed to predict the macroscopic dynamics of a localized disturbance. The examples show, in successive steps, how constituent properties and volume fraction determine the effective moduli, how those moduli determine the two dispersionless acoustic branches of the leading-order homogenized model, and how the branches and their polarizations reconstruct the observed displacement field. Rotation then provides a controlled symmetry-breaking test, demonstrating that the same framework captures the resulting coupling without changing the structure of the homogenized Cauchy problem.

\section{Operator separation of variables}
\label{sec_Sep_Var}

\subsection{Separation of the variables.}
Let us rewrite the problem  (\ref{eq1}), (\ref{init_cond}), using \eqref{stress_tensor}, as an equation for $\bu(x,\,t)$:
\begin{equation}
\label{main_eq}
\nabla_x\cdot (C(\frac{x_1}{\varepsilon}): \nabla_x\otimes \bu^\varepsilon)-\rho(\frac{x_1}{\varepsilon}) \bu^\varepsilon_{tt}=0,\quad
\bu^\varepsilon|_{t=0}=\bfv(x),\, \left.\frac{\partial}{\partial t}\bu^\varepsilon\right|_{t=0}=\bg(x).
\end{equation}
We are looking for the solution in the form
$$
\bu=\bFi(x,\,\frac{x_1}{\varepsilon},\,t;\, \varepsilon),
$$
where function $\bFi(x,\,y_1,\,t;\,\varepsilon)$ is the periodic function of the variable $y_1$ with period $l_1$. The variables $x$ are called slow variables and $y_1$ is called fast variable.

For the function $\bFi(x,\,y_1,\,t;\,\varepsilon)$ we get the following problem
\begin{gather}
\label{main_Fi_eq}
\mathcal{L}^\varepsilon(y_1)\bFi(x,\,y_1,\,t;\,\varepsilon)-\rho(y_1)\bFi_{tt}(x,\,y_1,\,t;\,\varepsilon)=0,\\
\label{init_cond_Fi}
\bFi(x,\,y_1,\,0;\,\varepsilon)=\bfv(x),\quad \bFi_t(x,\,y_1,\,0;\,\varepsilon)=\bg(x),\\
\label{interface_1_Fi}
\bFi^{-}(x,\,\gamma l_1,\,t;\,\varepsilon)=\bFi^{+}(x,\,\gamma l_1,\,t;\,\varepsilon),\\
\label{interface_2_Fi}
\Bigl[C^{1}(\gamma l_1):(\nabla_x \otimes \bFi)+\frac{1}{\varepsilon}(\mC(\gamma l_1)\cdot\bFi')\Bigr]^{-}
=\Bigl[C^{1}(\gamma l_1):(\nabla_x \otimes \bFi)+\frac{1}{\varepsilon}(\mC(\gamma l_1)\cdot\bFi')\Bigr]^{+}.
\end{gather}
The operator $\mathcal{L}^\varepsilon(y_1)$ is obtained by the direct substitution of the function $\bFi$ and the fact, that it depends only on $y_1$. This operator has the form
\begin{gather}
\label{oper_L_eps}
\mathcal{L}^\varepsilon(y_1)\bFi=\nabla_x\cdot (C(y_1): (\nabla_x\otimes \bFi)+\frac{1}{\varepsilon}\Bigl(C^1(y_1):(\nabla_x\otimes \bFi)\Bigr)'+\\
\nonumber
+\frac{1}{\varepsilon}C_1(y_1):(\nabla_x\otimes \bFi')+\frac{1}{\varepsilon^2}\Bigl(\mC(y_1)\bFi'\Bigr)'.
\end{gather}
Here the symbol ``$\prime$'' denotes derivative with respect to the $y_1$. 

Tensors $C_1(y_1)$, $C^1(y_1)$ and the matrix $\mC(y_1)$ are defined in \eqref{tens_C1_Cup1} and \eqref{matr_C}. Double contraction for the tensor $C_1(y_1)$ is by the last two indicies
$$
C_1(y_1):(\nabla_x\otimes \bFi')=C_{ijk1}(y_1)\frac{\partial \Phi_k'}{\partial x_j},\quad
C^1(y_1):(\nabla_x\otimes \bFi')=C_{i1kl}(y_1)\frac{\partial \Phi_k'}{\partial x_l}.
$$

We also introduced the values of the functions on the both sides of the interface
$$
\bFi^{\pm}(x,\,\gamma l_1,\,t;\,\varepsilon)=\bFi(x_1,\,\gamma l_1\pm 0,\,x_2,\,t;\,\varepsilon).
$$

The condition (\ref{interface_1_Fi}) comes from the continuity condition $\llbracket \bu^\varepsilon \rrbracket=0$ on the interface.
The condition (\ref{interface_2_Fi}) comes from the second condition on the interface --- continuity of the normal component of the stresses $\llbracket \bsigmaE\cdot \bn\rrbracket=0$.  In the present case the outward normal vector $\bn=(1,\,0)$. This gives 
$$
\bsigmaE\cdot \bn=\sigma^\varepsilon_{ij}n_j=\sigma^\varepsilon_{i1}=C_{i1kl}(\frac{x_1}{\varepsilon})\frac{\partial u_k^\epsilon}{\partial x_l}.
$$

\subsection{Fourier transform.}
Equations (\ref{main_Fi_eq})-(\ref{interface_2_Fi}) are the equations with constant coefficients with respect to the variable $x$ and thus we can pass to the Fourier transform in these equations with respect to the variable $x$. For the Fourier transform \eqref{Fourier_trans} of the derivative, we use the property
$$
F[f'(x)](p)=ip F[f(x)](p).
$$
We denote $\bp=(p_1,\,p_2)$ the vector of the dual variables to $x$, and we use the symbol ``$\hat{\quad}$'' to distinguish the Fourier transform of the function. 

After the implementation of the Fourier transform to the equation \eqref{oper_L_eps},   we have the equation
$$
\mathcal{L}^{\varepsilon}(y_1,\,p)\hat{\bFi}=-\bp\cdot (C(y_1): (\bp\otimes \hat{\bFi})+\frac{i}{\varepsilon}\Bigl(C^1(y_1):(\bp\otimes \hat{\bFi})\Bigr)'
+\frac{i}{\varepsilon}C_1(y_1):(\bp\otimes \hat{\bFi}')+
\frac{1}{\varepsilon^2}\Bigl(\mC(y_1)\hat{\bFi}'\Bigr)'.
$$

This gives the following system of equations
\begin{gather*}
\mathcal{L}^\varepsilon(y_1,\,p)\hat{\bFi}-\rho(y_1)\hat{\bFi}_{tt}=0,\\
\hat{\bFi}|_{t=0}=\hat{\bfv}(\bp),\quad \hat{\bFi}_t|_{t=0}=\hat{\bg}(\bp),\\
\hat{\bFi}^{-}=\hat{\bFi}^{+},\quad
\Bigl[i C^{1}(\gamma l_1):(\bp \otimes \hat{\bFi})+\frac{1}{\varepsilon}(\mC(\gamma l_1)\cdot\hat{\bFi}')\Bigr]^{-}=
\Bigl[i C^{1}(\gamma l_1):(\bp \otimes \hat{\bFi})+\frac{1}{\varepsilon}(\mC(\gamma l_1)\cdot\hat{\bFi}')\Bigr]^{+}.
\end{gather*}

We want to separate the fast variables $y_1$ and variables $p$. We cannot do that exactly, but we can look for the function $\hat{\bFi}$ in the following form of the product
\begin{equation}
\label{Phi_v_sep_var}
\hat{\bFi}=\bchi(y_1,\,p) \hat{v}(p,\,t),\quad 
\hat{v}(p,\,t)=A(p)e^{i t \omega(p)},
\end{equation}
for some function $A(p)$ and phase $\omega(p)$.

Note that the vector-field $\bchi(y_1,\,p)$, the function $\hat{v}(p,\,t)$, as well as $A(p)$ and $\omega(p)$, depend on the small parameter $\varepsilon$. We do not show explicitly this dependence to avoid cumbersome formulas.

Using analogy between variable $x$ and the dual variable $p$ we can identify function $\bchi(y_1,\,p)$ as symbol of a pseudo-differrential operator $\bchi(y_1,\,-i\nabla_x)$ which acts on the function $v(x,\,t)$ (definition we provide in Appendix \ref{PsiDO}). The function $v(x,\,t)$ (it is the inverse Fourier transform of $\hat{v}(p,\,t)$), following (\ref{Phi_v_sep_var}), satisfies the equation with constant coefficients
$$
-v_{tt}(x,\,t)=\omega^2(-i\nabla_x)v(x,\,t).
$$

Now we need to determine the function $\bchi(y_1,\,p)$ and phase $\omega(p)$. First we note, that operator $\mathcal{L}^\varepsilon(y_1,\,p)$ is the differential operator with respect to the variable $y_1$. 
Using definition (\ref{Phi_v_sep_var}) of the function $\hat{v}(p,\,t)$, the substitution and it's cancellation give the following problem 
\begin{gather}
\label{main_eq_chi}
\mathcal{L}^{\varepsilon}(y_1,\,p)\bchi(y_1,\,p)+\rho(y_1)\omega^2(p)\bchi(y_1,\,p)=0,\\
\label{period_cond_chi}
\bchi(0,\,p)=\bchi(l_1,\,p),\quad \bchi'(0,\,p)=\bchi'(l_1,\,p),\\
\label{interface_1_chi}
\bchi^{-}(\gamma l_1,\,p)=\bchi^{+}(\gamma l_1,\,p),\\
\label{interface_2_chi}
\Bigl[i C^{1}(\gamma l_1):(p \otimes \bchi)+\frac{1}{\varepsilon}(\mC(\gamma l_1)\cdot\bchi')\Bigr]^{-}
=\Bigl[i C^{1}(\gamma l_1):(p \otimes \bchi)+\frac{1}{\varepsilon}(\mC(\gamma l_1)\cdot\bchi')\Bigr]^{+}.
\end{gather}

We can see that the problem (\ref{main_eq_chi})-(\ref{interface_2_chi}) is the spectral problem with respect to the fast variable $y_1$ with periodic coefficients. Here the vector-field $\bchi(y_1,\,p)$ is the eigenfunction and the phase $\omega^2(p)$ is the eigenvalue. The variable $p$ here is the parameter.

\begin{definition}
Let us define the following functional space of periodic vector-functions with continuity condition on the interface $y_1=\gamma l_1$
$$
H^1_{per, I}=\{ \bu(y_1)\in H^1[0,\,l_1]\times H^1[0,\,l_1]\,:\, \bu(0)=\bu(l_1),\,\bu'(0)=\bu'(l_1),\,\bu^{-}(\gamma l_1)=\bu^{+}(\gamma l_1)\}.
$$
with the following inner product
\begin{equation}
\label{H_inner_prod}
(\bu,\,\bv)_{H^1_{per, I}}=\frac{1}{l_1}\left(\int\limits_{0}^{\gamma l_1}\langle \bu,\,\overline{\bv}\rangle dy_1+\int\limits_{\gamma l_1}^{l_1}\langle \bu,\,\overline{\bv}\rangle dy_1\right),
\end{equation}
where the line denotes the complex conjugation.
\end{definition}

Let us multiply the equation (\ref{main_eq_chi}) by $\varepsilon^2$ and then integrate it by parts after multiplication by the test function $\bpsi\in H^1_{per, I}$. We have the following
\begin{gather}
\nonumber
(\mC(y_1)\bchi',\,\bpsi')+i\varepsilon\Bigl((C^1(y_1):(\bp\otimes \bchi),\,\bpsi')-(C_1(y_1):(\bp\otimes \bchi'),\,\bpsi)\Bigr)=\\
\label{main_spec_eq_weak}
=\varepsilon^2\Bigl(\rho(y_1)\omega^2(p)\bchi-\bp\cdot (C(y_1): (\bp\otimes \bchi),\,\bpsi\Bigr).
\end{gather}

We impose the normalization condition on the $\chi$:
\begin{equation}
\label{norm_chi}
\|\bchi\|_{H^1_{per, I}}=1.
\end{equation}

\begin{definition}
We call the vector $\bchi(y_1,\,p)$ the weak solution of the spectral problem (\ref{main_eq_chi})-(\ref{interface_2_chi}), if $\bchi(y_1,\,\cdot)\in H^1_{per, I}$, satisfies  \eqref{main_spec_eq_weak} for any $\bpsi(y_1)\in H^1_{per, I}$ and the normalization condition \eqref{norm_chi}.
\end{definition}

\section{Asymptotic Expansion. Proof of the theorem \ref{thm_hom_eq}.}
\label{sec_hom_thm_proof}

We do not discuss the existence of the solution of \eqref{main_spec_eq_weak}, we assume it exists. We construct the asymptotic expansion of this solution in small parameter $\varepsilon$. This leads to the so-called the formal asymptotic solution.
Let us suppose that $\bchi(y_1,\,p)$ and $\omega(p)$ can be presented as the following expansion
\begin{equation}
\label{chi_expans}
\bchi(y_1,\,p)=\bchi_0(y_1,\,p)-i\varepsilon \bchi_1(y_1,\,p)-\varepsilon^2\bchi_2(y_1,\,p)+\ldots,\quad 
\omega^2(p)=\omega_0^2(p)+\varepsilon\omega_1(p)+\ldots.
\end{equation}
The normalization condition \eqref{norm_chi} leads to the following relations
\begin{equation}
\label{norm_chi0_chi1}
\|\bchi_0(y_1,\,p)\|_{H^1_{per, I}}=1,\quad Im(\bchi_0,\,\bchi_1)=0,\quad
\|\bchi_1(y_1,\,p)\|^2_{H^1_{per, I}}-2Re\, (\bchi_0,\,\bchi_2)=0.
\end{equation}

\begin{theorem}[{\bf Asymptotic expansion}]
\label{thm_as}
Let $((\omega^s_0(p))^2,\,\bchi^s_0(p))$, $s=1,\,2$, be the solution of the spectral problem (\ref{spec_prob}) such that $|\bchi^s_0(p)|=1$. Let the functions $v^s(x,\,t)$ be the solution of the problems
\begin{gather}
\label{homogen_eq}
-\frac{\partial^2}{\partial t^2}v^s(x,\,t)=\left(\omega_0^s(-i\nabla_x)\right)^2v^s(x,\,t),\\
\nonumber
v^s(x,\,0)=\langle\bchi_0^s(-i\nabla_x),\,\bfv(x)\rangle,\quad \frac{\partial}{\partial t}v^s(x,\,0)=\langle\bchi_0^s(-i\nabla_x),\,\bg(x)\rangle.
\end{gather}
Then the function
\begin{equation}
\label{u_as_sol}
\bu_{as}(x,\,t)=\sum\limits_{s}\Bigl(\bchi_0^s(-i\nabla_x)-i\varepsilon \bchi_1(\frac{x_1}{\varepsilon},\,-i\nabla_x)-\varepsilon^2\bchi_2(\frac{x_1}{\varepsilon},\,-i\nabla_x)\Bigr)v^s(x,\,t)
\end{equation}
satisfies the  equation (\ref{main_eq}) and initial conditions \eqref{init_cond} with the residue $O(\varepsilon)$.

The symbols of the operators $\bchi_{1,\,2}^s(y_1,\,p)$ are defined as follows
$$
\bchi^s_1(y_1,\,p)=\left(\int\limits_{0}^{y_1}\mathcal{A}(\eta)d\eta\right):(p\otimes\bchi^s_0),
$$
where the tensor $\mathcal{A}(y_1)$ is defined in \eqref{tensor_A_big}.
The vectors $\bchi_2^s(y_1,\,p)$ are the solution of the problem
\begin{gather}
\label{weak_prob_chi2}
(\mC(y_1)(\bchi^s_2)^{\prime}(y_1),\,\psi'(y_1))+(C^1(y_1):(\bp\otimes \bchi^s_1(y_1)),\,\psi'(y_1))=\\
\nonumber
-(C_1(y_1):(\bp\otimes (\bchi^s_1)'),\,\psi(y_1))-\Bigl(\rho(y_1)\omega_0^2(p)\bchi^s_0-\bp\cdot (C(y_1): (\bp\otimes \bchi^s_0),\,\psi(y_1)\Bigr),\forall\,\psi(y_1)\in H^1_{per,I}.
\end{gather}

\end{theorem}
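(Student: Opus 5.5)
We substitute the expansion \eqref{chi_expans} into the weak spectral problem \eqref{main_spec_eq_weak} and collect powers of $\varepsilon$. Then we solve the resulting chain of cell problems in $H^1_{per,I}$, fixing at each order the data left free by the previous order. Finally we show that the operator \eqref{u_as_sol}, built from these symbols, satisfies \eqref{main_eq} and \eqref{init_cond} up to $O(\varepsilon)$.

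\textbf{Order $\varepsilon^0$.} We obtain $(\mC\bchi_0',\bpsi')=0$ for every $\bpsi\in H^1_{per,I}$. Taking $\bpsi=\bchi_0$ and using the positivity \eqref{matr_C_posit} from Statement \ref{st_C1_Cup1_symmetry}, we get $\bchi_0'=0$. Hence $\bchi_0=\bchi_0(p)$ does not depend on $y_1$.

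\textbf{Order $\varepsilon^1$.} Since $\bchi_0'=0$, the $C_1$ term drops out, and the coefficient of $\varepsilon$ gives
$$
(\mC\bchi_1',\bpsi')+(C^1:(p\otimes\bchi_0),\bpsi')=0 .
$$
This says that the flux $\mC\bchi_1'+C^1:(p\otimes\bchi_0)$ equals a constant vector $K$, continuous across $y_1=\gamma l_1$. Solving, $\bchi_1'=\mC^{-1}\bigl(K-C^1:(p\otimes\bchi_0)\bigr)$. Periodicity of $\bchi_1$ requires $\langle\bchi_1'\rangle=0$, which fixes
$$
K=\langle\mC^{-1}\rangle^{-1}\langle\mC^{-1}C^1\rangle:(p\otimes\bchi_0).
$$
Therefore $\bchi_1'=-\mathcal{A}(y_1):(p\otimes\bchi_0)$. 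Up to the sign convention in \eqref{chi_expans}, this gives $\bchi_1=\bigl(\int_0^{y_1}\mathcal{A}\bigr):(p\otimes\bchi_0)$ plus an additive constant, which we fix using the normalization \eqref{norm_chi0_chi1}. The zero mean $\langle\mathcal{A}\rangle=0$ makes the antiderivative periodic, and continuity at $\gamma l_1$ is automatic. (I would check the sign carefully against \eqref{u_as_sol_main}.)

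\textbf{Order $\varepsilon^2$.} This level is exactly problem \eqref{weak_prob_chi2} for $\bchi_2$. Its left-hand side is a coercive operator on $H^1_{per,I}$ modulo constants. The Fredholm solvability condition is therefore obtained by taking $\psi$ constant: the right-hand side must be orthogonal to constants. This yields
$$
\langle\rho\rangle\omega_0^2\bchi_0=\langle p\cdot C:(p\otimes\bchi_0)\rangle-\langle C_1:(p\otimes\bchi_1')\rangle .
$$
Inserting $\bchi_1'$ and expanding $\mathcal{A}$ turns the right-hand side into $p\cdot\overline{C}:(p\otimes\bchi_0)$, with $\overline{C}$ given by \eqref{C_hom}; the cyclic symmetry \eqref{C1_T_permut} is needed to match the index order. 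This is precisely the spectral problem \eqref{spec_prob} with matrix $M(p)$. By Statement \ref{st_matr_M}, $M(p)$ is symmetric and positive definite, so it has two eigenpairs $s=1,2$. Under this condition, \eqref{weak_prob_chi2} then has a solution $\bchi_2^s$, unique up to a constant, which \eqref{norm_chi0_chi1} fixes. Its boundedness in $y_1$ follows from the 1D structure (explicit integration).

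\textbf{Residual.} We apply the operator of \eqref{main_eq} to \eqref{u_as_sol}, using the composition formulas of Appendix \ref{PsiDO}. Since the symbols depend only on $y_1$ and $p$, and the $x_1$-derivative of $\bchi(x_1/\varepsilon,p)$ produces the $\varepsilon^{-1}\partial_{y_1}$ terms exactly as in \eqref{oper_L_eps}, the orders $\varepsilon^{-2}$, $\varepsilon^{-1}$ and $\varepsilon^{0}$ cancel by the three cell problems above. The interface conditions \eqref{interface_1_chi} and \eqref{interface_2_chi} hold at each order because the weak formulation encodes flux continuity. What remains is $\varepsilon(\ldots)$, involving $\bchi_1$, $\bchi_2$ and third-order derivatives of $v^s$, which is $O(\varepsilon)$ when the data are smooth.

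\textbf{Initial data.} At $t=0$, $\sum_s\bchi_0^s(-i\nabla)\langle\bchi_0^s(-i\nabla),\bfv\rangle=\bfv$, because $\{\bchi_0^s(p)\}$ is an orthonormal eigenbasis of the symmetric matrix $M(p)$. The same holds for $\bg$. The $\varepsilon\bchi_1$ and $\varepsilon^2\bchi_2$ terms are bounded, so they contribute only $O(\varepsilon)$.

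\textbf{Main obstacle.} The hardest step is the $\varepsilon^2$ solvability computation: showing that the averaged expression reduces exactly to the tensor \eqref{C_hom}, with the right contractions and signs between $C_1$, $\mC^{-1}$ and $C^1$. I would first verify it in the orthorhombic case against \eqref{C_hom_orthoromb}, and then do the index computation in general.
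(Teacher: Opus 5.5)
Your route is the paper's. Substitute \eqref{chi_expans} into \eqref{main_spec_eq_weak}, solve the three cell problems, and obtain the solvability condition for $\bchi_2$ by testing with constant $\bpsi$. This gives $\overline{C}=\langle C\rangle-\langle C_1\mathcal{A}\rangle$, which is \eqref{C_hom} as soon as you expand $\mathcal{A}$, so the step you flag as the main obstacle is short. Then check the residual and the initial data. Two of your steps are cleaner than the paper's: the order-by-order cancellation with $\omega_0$ (the paper detours through an exact eigenpair), and the completeness argument with the orthonormal eigenbasis $\{\bchi_0^s(\bp)\}$ of $M(\bp)$ (the paper uses a single-$s$ normalization).

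What needs repair is the signs, and this is not cosmetic. With $\bchi=\bchi_0-i\varepsilon\bchi_1-\varepsilon^2\bchi_2$, the $\varepsilon^1$ coefficient of \eqref{main_spec_eq_weak} is $-i\bigl[(\mC\bchi_1',\bpsi')-(C^1:(\bp\otimes\bchi_0),\bpsi')\bigr]$. So the constant flux is $\mC\bchi_1'-C^1:(\bp\otimes\bchi_0)$, not the sum you wrote. Periodicity then gives $\bchi_1'=+\mathcal{A}:(\bp\otimes\bchi_0)$ directly, consistent with \eqref{u_as_sol_main} since $(-i)(-i)=-1$. Your solvability identity, with the term $-\langle C_1:(\bp\otimes\bchi_1')\rangle$, yields \eqref{C_hom} only for this sign. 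Feeding in your $-\mathcal{A}$ would flip both correction terms in $\overline{C}$.

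The same care is needed at order $\varepsilon^2$. Collecting terms with this $\bchi_1$ gives $(\mC\bchi_2',\bpsi')-(C^1:(\bp\otimes\bchi_1),\bpsi')$ on the left, whereas \eqref{weak_prob_chi2} as printed has a plus in front of the $C^1$ term. The solvability condition cannot see this, because the $\bpsi'$ terms vanish for constant $\bpsi$. Your residual step does see it: with the printed sign, the order-$\varepsilon^0$ symbol residual is $2\bigl(C^1:(\bp\otimes\bchi_1)\bigr)'$, which is nonzero in general. So define $\bchi_2$ by the equation you actually derive, rather than asserting that this level is exactly \eqref{weak_prob_chi2}.

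Two minor points:
\begin{itemize}
\item \eqref{norm_chi0_chi1} does not fix the additive constants. $\mathrm{Im}(\bchi_0,\bchi_1)=0$ holds for any real constant shift of $\bchi_1$ (the theorem fixes it by the lower limit $0$), and the last relation only constrains the $\bchi_0$-component of the constant in $\bchi_2$. These constants do not spoil the $O(\varepsilon)$ bound.
\item \eqref{interface_2_chi} is not satisfied at every order. The top piece $-i\varepsilon^2C^1:(\bp\otimes\bchi_2)$ of the traction jumps at $y_1=\gamma l_1$; this small mismatch belongs to the residual.
\end{itemize}
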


\begin{proof}
The proof of this theorem is given in two parts. The first part is about the calculations of the terms $\bchi_{0,1,2}(y_1,\,p)$ in the expansion \eqref{chi_expans}. The second part is about verifying the residue of the substitution of \eqref{u_as_sol} into the original equation.

\paragraph{Calculation of the terms of the expansion.} Substitution of the expansion \eqref{chi_expans} into the equation \eqref{main_spec_eq_weak} leads to the sequence of the problems to determine the terms of the expansion. 

The function $\bchi_0$ satisfies the following problem
$$
(\mathcal{C}(y_1)\bchi_0'(y_1),\,\psi'(y_1))_{H^1_{per, I}}=0,\quad \forall\,\psi(y_1)\in H^1_{per, I}.
$$
The solution of this problem is the combination of two linearly independent vectors $\bchi^{1,2}_0$ which do not depend on $y_1$. 

For the function $\bchi_1$ we have the following equation
\begin{equation}
\label{eq_weak_chi1}
\Bigl(\mC(y_1)\bchi_1'(y_1)-C^1(y_1):(\bp\otimes \bchi_0),\,\psi'(y_1)\Bigr)=0,\quad \forall\,\psi(y_1)\in H^1_{per,I}.
\end{equation}
Since $\psi(y_1)\in H^1_{per,I}$, we have that derivative $\psi'(y_1)$ has the zero mean value. Because (\ref{eq_weak_chi1}) holds for any $\psi(y_1)$, we obtain that
$$
\mC(y_1)\bchi_1'(y_1)-C^1(y_1):(\bp\otimes \bchi_0)=\xi.
$$
The vector $\xi$ does not depend on $y_1$, it is convenient to present this vector in the following form
$$
\xi=A:(p\otimes \bchi_0),
$$
where $A=A_{ikl}$ is the some tensor.

In this case we have
$$
\bchi_1'(y_1)=\mathcal{C}^{-1}(y_1)\Bigl(A+C^1(y_1)\Bigr):(p\otimes\bchi_0).
$$
The tensor $A$ can be found from the condition of the periodicity of the $\bchi_1(y_1)$ such that it has the zero mean $\langle \bchi_1'(y_1) \rangle=0$. This gives the equation
$$
\langle \bchi_1'(y_1)\rangle=\Bigl(\langle\mathcal{C}^{-1}(y_1)\rangle A+\langle \mathcal{C}^{-1}(y_1) C^1(y_1)\rangle\Bigr):(p\otimes\bchi_0)=0.
$$
From this equation we determine the tensor $A$
$$
A=-\langle \mathcal{C}^{-1}(y_1)\rangle^{-1} \langle \mathcal{C}^{-1}(y_1) C^1(y_1)\rangle\Leftrightarrow 
A_{ikl}=-\Bigl(\langle \mathcal{C}^{-1}(y_1)\rangle^{-1}\Bigr)_{im} \langle \Bigl(\mathcal{C}^{-1}(y_1)\Bigr)_{mr} \Bigl(C^1(y_1)\Bigr)_{rkl}\rangle
$$

This leads to the  representation of the $\bchi_1'(y_1)$
$$
\bchi_1'(y_1)=\mathcal{A}(y_1):(p\otimes\bchi_0),
$$
where tensor $\mathcal{A}(y_1)$ is defined in \eqref{tensor_A_big}.

Finally for the function $\bchi_2$ we have the equation \eqref{weak_prob_chi2}. 
The solvalibility condition of this equation is given in the appendix in \cite{bakhvalov_homogenisation_1989}, and this condition is the orthogonality of the right-hand side of this equation to the constant vectors with respect to the variable $y_1$. Thus we have the following equation for the $\bchi_0$
$$
\frac{1}{l_1}\int\limits_{0}^{l_1}\rho(y_1)\omega_0^2(p)\bchi_0dy_1=
\frac{1}{l_1}\int\limits_{0}^{l_1}\bp\cdot (C(y_1): (\bp\otimes \bchi_0)dy_1-\frac{1}{l_1}\int\limits_{0}^{l_1}C_1(y_1):(\bp\otimes \bchi_1')dy_1.
$$

Now let us simplify the tensor terms in this expression. First of all, we have the equation
\begin{gather*}
[p\otimes\bchi_1'(y_1)]_{jk}=\Biggl[p\otimes \Bigl(\mathcal{A}(y_1):(p\otimes \bchi_0)\Bigr)\Biggr]_{jk}=\\
=\Bigl[p\otimes \Bigl(\mathcal{A}_{kmn}(y_1)p_m (\bchi_0)_n\Bigr)\Biggr]_{jk}=p_j\mathcal{A}_{kmn}(y_1)p_m (\bchi_0)_n.
\end{gather*}
After that we have
$$
\Bigl[C_1(y_1):(\bp\otimes \bchi_1')\Bigl]_{i}=C_{ijk1}(y_1)p_j\mathcal{A}_{kmn}(y_1)p_m (\bchi_0)_n.
$$
Now we use the symmetry property of the basic tensor $C_{ijkl}(y_1)=C_{jikl}(y_1)$ and thus we have
$$
C_{ijk1}(y_1)p_j=p_jC_{jik1}(y_1)=p_i C_{ijk1}(y_1).
$$
In the last equality we changed the names of indices $i\leftrightarrow j$. Finally we have the following
$$
\Bigl[C_1(y_1):(\bp\otimes \bchi_1')\Bigl]_{j}=p_iC_{ijk1}(y_1)\mathcal{A}_{kmn}(y_1)p_m (\bchi_0)_n=
\Biggl[p\cdot C_1(y_1)\cdot\mathcal{A}(y_1):(p\otimes\bchi_0)\Biggr]_j.
$$

As a result we obtain the following spectral problem for $\bchi_0$ and $\omega_0^2(p)$
leads to the equation (\ref{spec_prob})
$$
\langle\rho(y_1)\rangle\omega_0^2(p)\bchi_0=p\cdot \Bigl(\langle C(y_1)\rangle-\langle C_1(y_1)\cdot\mathcal{A}(y_1) \rangle
\Bigr):(p\otimes\bchi_0).
$$
This formula leads to the definition of the homogenized tensor \eqref{C_hom}. 

\paragraph{Verification of the residue.}
Function $\bu_{as}(x,\,t)$ is of the form \eqref{Phi_v_sep_var} with separated fast and slow variables, thus the action of the operator in \eqref{main_eq} on this function is described by the operator $\mathcal{L}^\varepsilon(y_1)$ defined in \eqref{oper_L_eps}.

Let us suppose that $\bchi(y_1,\,p)$ be the solution to the spectral problem \eqref{main_spec_eq_weak} and $v(x,\,t)$ satisfies the equation \eqref{homogen_eq}, then the function $\bu(x,\,t)=\bchi(x_1/\varepsilon,\,-i\nabla_x) v(x,\,t)$ satisfies the initial equation \eqref{main_eq}. Here we have to take into account that $\bchi(x_1/\varepsilon,\,-i\nabla_x)$ is the pseudo-differential operator with standard quantization (the operator $-i\nabla$ acts first and then multiplication by $x_1/\varepsilon$), see  Appendix \ref{PsiDO}.

Substitution of the function $\bu(x,\,t)$ into the equation \eqref{main_eq} leads to the following equality
$$
\nabla_x\cdot (C(\frac{x_1}{\varepsilon}): \nabla_x\otimes \bu(x,\,t))=\left(\mathcal{L}^\varepsilon(y_1,\,\partial_{y_1},\,\nabla_x)\bchi(y_1,\,-i\nabla_x) v(x,\,t)\right)\Biggr|_{y_1=x_1/\varepsilon}.
$$

We have the composition of the differential operator $\mathcal{L}^\varepsilon(y_1)$ and pseudo-differential operator $\bchi(y_1,\,-i\nabla_x)$. Because both these operators do not depend on variable $x$, we can write the action in the simple Fourier form
$$
\mathcal{L}^\varepsilon(y_1,\,\partial_{y_1},\,\nabla_x)\bchi(y_1,\,-i\nabla_x) v(x,\,t)=F^{-1}[\mathcal{L}^\varepsilon(y_1,\,\partial_{y_1},\,p)\bchi(y_1,\,p) \hat{v}(p,\,t)].
$$
Because the vector $\bchi(y_1,\,p)$ is the eigenvector of the operator $\mathcal{L}^\varepsilon(y_1,\,\partial_{y_1},\,p)$, we have
$$
\mathcal{L}^\varepsilon(y_1,\,\partial_{y_1},\,\nabla_x)\bchi(y_1,\,-i\nabla_x) v(x,\,t)=F^{-1}[\rho(y_1)\bchi(y_1,\,p) \omega^2(p)\hat{v}(p,\,t)]=
\rho(y_1)\bchi(y_1,\,-i\nabla_x) \omega^2(-i\nabla_x) v(x,\,t),
$$
where we used once more the definition of the action of the pseudo-differential operator.

This leads to the following
$$
\nabla_x\cdot (C(\frac{x_1}{\varepsilon}): \nabla_x\otimes \bu(x,\,t))=\rho(\frac{x_1}{\varepsilon})\bchi(\frac{x_1}{\varepsilon},\,-i\nabla_x) \omega^2(-i\nabla_x) v(x,\,t).
$$

On the other hand, we have
$$
\rho(\frac{x_1}{\varepsilon})\bu_{tt}(x,\,t)=\rho(\frac{x_1}{\varepsilon})\bchi(\frac{x_1}{\varepsilon},\,-i\nabla_x)v_{tt}(x,\,t)=\rho(\frac{x_1}{\varepsilon})\bchi(\frac{x_1}{\varepsilon},\,-i\nabla_x) \omega^2(-i\nabla_x) v(x,\,t).
$$ 
This shows that the function $\bu(x,\,t)$ is the solution of the original equation \eqref{main_eq}. 

The expansion for $\bchi(y_1,\,p)$ given in \eqref{u_as_sol} is constructed in the way that it satisfies the equation \eqref{main_spec_eq_weak} with error $O(\varepsilon)$. Thus the function $\bu_{as}(x,\,t)$ satisfies the equation with order $O(\varepsilon)$.

For the initial conditions we have the correction of the first order $O(\varepsilon)$, which follows from the same expansion for the operator $\bchi(y_1,\,p)$. Since the normalization condition \eqref{norm_chi0_chi1} we have
$$
\bchi_0(p)\hat{v}(p,\,0)=\hat{\bfv}(p)\Rightarrow \hat{v}(p,\,0)=\bchi_0^T(p)\hat{\bfv}(p)\Rightarrow v(x,\,0)=\bchi_0^T(-i\nabla_x)\bfv(x).
$$
The same reasoning is valid for the initial velocity $v_t(x,\,0)=\bchi_0^T(-i\nabla_x)\bg(x)$.

\end{proof}

Using the Theorem \ref{thm_as} we can construct the correction term and obtain the expansion \eqref{u_as_sol_main}. The function $\bu_2(x,\,t)$ in this expansion is 
$$
\bu_2(x,\,t)=\sum\limits_{s=1}^{2}\bchi^s_2(x_1/\varepsilon,\,-i\nabla_x)v_s(x,\,t).
$$

The Theorem \ref{thm_as} concludes the proof of the Homogenization theorem \ref{thm_hom_eq}.

\appendix
\section{Proof of the Theorem \ref{thm_hom_tensor}}
\label{proof_thm_hom_tensor}

Recall that the homogenized tensor \eqref{C_hom}  has the form
$$
\overline{C}=\langle C\rangle-\langle C_1\mathcal{C}^{-1}C^1\rangle+\langle C_1\mathcal{C}^{-1}\rangle \langle \mathcal{C}^{-1}\rangle^{-1} \langle \mathcal{C}^{-1}C^1\rangle.
$$

Let us demonstrate that it satisfies the major symmetry property.  We will show it for each component of this tensor. It is obvious that $\langle C(y_1)\rangle$ satisfies the symmetry since it is just averaging of the original tensor.

Together with \eqref{C1_Cup1_symmetry},  \eqref{C1_T_permut} and symmetry of the matrix $(\mathcal{C}^{-1}(y_1))=(\mathcal{C}^{-1}(y_1))^T$, we have the symmetry for the second part of the tensor
\begin{gather*}
\Bigl(C_1(y_1)\mathcal{C}^{-1}(y_1)C^1(y_1)\Bigr)_{ijkl}=(C_1(y_1))_{ijm}(\mathcal{C}^{-1}(y_1))_{mp}(C^1(y_1))_{pkl}=\\
=(C^1(y_1))_{mij}(\mathcal{C}^{-1}(y_1))_{mp}(C_1(y_1))_{klp}=(C_1(y_1))_{klp}(\mathcal{C}^{-1}(y_1))_{pm}(C^1(y_1))_{mij}=\Bigl(C_1(y_1)\mathcal{C}^{-1}(y_1)C^1(y_1)\Bigr)_{klij}.
\end{gather*}

For the third term, we have 
\begin{gather*}
\Bigl(\langle C_1(y_1)\mathcal{C}^{-1}(y_1)\rangle\langle \mathcal{C}^{-1}(y_1)\rangle^{-1} \langle \mathcal{C}^{-1}(y_1) C^1(y_1) \rangle\Bigr)_{ijkl}=\\
=\langle (C_1(y_1))_{ijm}(\mathcal{C}^{-1}(y_1))_{mp}\rangle(\langle \mathcal{C}^{-1}(y_1)\rangle^{-1})_{pq} \langle (\mathcal{C}^{-1}(y_1))_{qr} (C^1(y_1))_{rkl}\rangle.
\end{gather*}
Now we have the following sequence of equalities
\begin{gather*}
\langle (C_1(y_1))_{ijm}(\mathcal{C}^{-1}(y_1))_{mp}\rangle=\langle (\mathcal{C}^{-1}(y_1))_{mp} (C^1(y_1))_{mij}\rangle=\langle (\mathcal{C}^{-1}(y_1))_{pm} (C^1(y_1))_{mij}\rangle.
\end{gather*}
Here we just changed order of multiplication of two numbers $(\mathcal{C}^{-1}(y_1))_{mp}$ and $(C^1(y_1))_{mij}$ and then we used the symmetry $(\mathcal{C}^{-1}(y_1))=(\mathcal{C}^{-1}(y_1))^T$.
Similarly we have
$$
\langle (\mathcal{C}^{-1}(y_1))_{qr} (C^1(y_1))_{rkl}\rangle=\langle (C_1(y_1))_{klr} (\mathcal{C}^{-1}(y_1))_{rq}\rangle.
$$

Taking into account the symmetry $(\langle \mathcal{C}^{-1}(y_1)\rangle^{-1})=(\langle \mathcal{C}^{-1}(y_1)\rangle^{-1})^T$ we arrive that 
$$
\Bigl(\langle C_1(y_1)\mathcal{C}^{-1}(y_1)\rangle\langle \mathcal{C}^{-1}(y_1)\rangle^{-1} \langle \mathcal{C}^{-1}(y_1) C^1(y_1) \rangle\Bigr)_{ijkl}=
\Bigl(\langle C_1(y_1)\mathcal{C}^{-1}(y_1)\rangle\langle \mathcal{C}^{-1}(y_1)\rangle^{-1} \langle \mathcal{C}^{-1}(y_1) C^1(y_1) \rangle\Bigr)_{klij}.
$$

The right and left minor symmetries of $\overline{C}$ are the corollary of the \eqref{C1_Cup1_symmetry}.

Now let us show that the tensor \eqref{C_hom} is positive definite, meaning
$$
\overline{C}\xi_{ij}\xi_{kl}\ge A |\xi_{mn}|^2>0, 
$$
where $A>0$ defined in \eqref{C_tens}.

We start from the inequality
\begin{equation}
\label{C_C_tens_ineq}
C(y_1)\ge C_1(y_1)\mC^{-1}(y_1)C^1(y_1)\Leftrightarrow (C(y_1)-C_1(y_1)\mC^{-1}(y_1)C^1(y_1))\xi_{ij}\xi_{kl}\ge 0.
\end{equation}

According to \eqref{C_tens}, the tensor $C(y_1)$ is positive definite, meaning that $C_{ijkl}(y_1)\eta_{ij}\eta_{kl}\ge A|\eta_{mn}|^2$, for any $\eta_{ij}$. Let us choose
$$
\eta_{ij}=\xi_{ij}+\frac{1}{2}(w_i\delta_{j1}+w_j\delta_{i1}),\quad \forall\,\xi_{ij},
$$
where $\delta_{mn}$ is the Kronecker $\delta$-symbol.
For that particular form of $\eta_{ij}$ we have
\begin{gather}
\nonumber
C(y_1)_{ijkl}\eta_{ij}\eta_{kl}=\\
\nonumber
=C(y_1)_{ijkl}\xi_{ij}\xi_{kl}+C(y_1)_{ijkl}\xi_{ij}(w_k\delta_{l1}+w_l\delta_{k1})+\frac{1}{4}C(y_1)_{ijkl}(w_i\delta_{j1}+w_i\delta_{j1})(w_k\delta_{l1}+w_l\delta_{k1})=\\
\label{C_eta_tensor}
=C(y_1)_{ijkl}\xi_{ij}\xi_{kl}+2C(y_1)_{ijkl}\xi_{ij}w_k\delta_{l1}+C(y_1)_{ijkl}w_i\delta_{j1}w_k\delta_{l1}.
\end{gather}
Here for the simplification of the second and the third terms in this expression we used the symmetry of the tensor $C(y_1)$:
$$
C(y_1)_{ijkl}\xi_{ij}w_k\delta_{l1}=C(y_1)_{ijlk}\xi_{ij}w_k\delta_{l1}=C(y_1)_{ijkl}\xi_{ij}w_l\delta_{k1}.
$$
The last equality is valid because of the summation over both indices $k$ and $l$.

Calculating the summation over $\delta$-symbols in \eqref{C_eta_tensor}, we arrive to the following
$$
C(y_1)_{ijkl}\eta_{ij}\eta_{kl}=C(y_1)_{ijkl}\xi_{ij}\xi_{kl}+2C(y_1)_{ijk1}\xi_{ij}w_k+C(y_1)_{i1k1}w_iw_k.
$$
We rewrite this equality in the compact form. Using \eqref{tens_C1_Cup1} and \eqref{matr_C}, we have
\begin{equation}
\label{C_eta_tensor_fin}
\eta^TC(y_1)\eta=\xi^TC(y_1)\xi+2\xi^TC_1(y_1)w +w^T\mC(y_1)w>0.
\end{equation}

The equation \eqref{C_eta_tensor_fin} is the quadratic form with respect to the vector $w$. The argument of the minimum of this form is the vector $w^*=-\mC^{-1}(y_1) f$, where we denoted $f=C^1(y_1)\xi$. Substitution of the $w^*$ leads to the following 
$$
\xi^TC(y_1)\xi+2f^Tw^*+(w^{*})^{T}\mC(y_1)w^*=\xi^TC(y_1)\xi-2f^T\mC^{-1}(y_1)f+f^T\mC^{-1}(y_1)f=\xi^TC(y_1)\xi-f^T\mC^{-1}(y_1)f>0.
$$
Taking into account the form of the vector $f$, we arrive to the \eqref{C_C_tens_ineq}
$$
\xi^TC(y_1)\xi>f^T\mC^{-1}(y_1)f=\xi^T C_1(y_1)\mC^{-1}(y_1)C^1(y_1)\xi,\quad \forall\,\xi.
$$

From the inequality \eqref{C_C_tens_ineq} we obtain 
$$
\overline{C}>\langle C_1\mathcal{C}^{-1}\rangle \langle \mathcal{C}^{-1}\rangle^{-1} \langle \mathcal{C}^{-1}C^1\rangle.
$$
Now we show that the tensor in the right-hand side is positive definite. We have
$$
\langle C_1\mathcal{C}^{-1}\rangle \langle \mathcal{C}^{-1}\rangle^{-1} \langle \mathcal{C}^{-1}C^1\rangle\xi_{ij}\xi_{kl}=
\langle (\mathcal{C}^{-1}f)^T\rangle \langle \mathcal{C}^{-1}\rangle^{-1} \langle \mathcal{C}^{-1}f\rangle=
v^T \langle \mathcal{C}^{-1}\rangle^{-1}v.
$$
where the vector $f=C^1(y_1)\xi$ is the same as before and the vector $v=\langle \mathcal{C}^{-1}f\rangle$.

The matrix $\mC(y_1)$ is positive-definite, see \eqref{matr_C_posit}, therefore the matrix inverse matrix $\mC^{-1}(y_1)$ is positive definite too.  Thus, the matrices $\langle \mC^{-1}\rangle$  and $\langle \mC^{-1}\rangle^{-1}$ are also positive definite, moreover 
$$
v^T\langle \mC^{-1}\rangle^{-1}v\ge A |v|^2,
$$
where $A>0$ is the same constant that is in the \eqref{matr_C_posit}. Finally we arrive to 
$$
\overline{C}>\langle C_1\mathcal{C}^{-1}\rangle \langle \mathcal{C}^{-1}\rangle^{-1} \langle \mathcal{C}^{-1}C^1\rangle\ge A>0.
$$
This proves the theorem.

\section{Basic notion of the pseudo-differential operators}
\label{PsiDO}

Here we provide some useful information about the pseudo-differential operators ($\Psi$DO) and describe our approach to analysis of them, based on \cite{hormander_analysis_2007, maslov_semi-classical_2001, martinez_introduction_2002, zworski_semiclassical_2012}.

Let $x\in\mathbb{R}^2$, then the differential operator $D(x)$ of order $m$ with variable coefficients is the following operator 
$$
D(x)u(x)=\sum\limits_{i, j=0}^{m}a_{ij}(x)(-i)^{i+j} \frac{\partial^{i+j}}{\partial x_1^i \partial x_2^j} u(x),
$$
Formally speaking the pseudo-differential $\Psi D(x)$ operator is the infinite series
$$
\Psi D(x)u(x)=\sum\limits_{i, j=0}^{+\infty}a_{ij}(x)(-i)^{i+j} \frac{\partial^{i+j}}{\partial x_1^i \partial x_2^j}u(x).
$$

Now let us give the notion of the symbol of the $\Psi$DO. The symbol of the operator is the function, which define our operator. The general idea is the following: to work with the symbols of the operators, which are the functions, and only in the end pass to the action on the function.

We start from the example of the constant coefficients. Let us assume that function $u(x)$ is an analytical function and let us define the action of the $\Psi$DO defined as sine 
$$
\sin\left(-i\frac{\partial}{\partial x_1}-i\frac{\partial}{\partial x_2}\right)u(x).
$$
We can formally write the Taylor expansion 
$$
\sin\left(-i\frac{\partial}{\partial x_1}-i\frac{\partial}{\partial x_2}\right)u(x)=\sum\limits_{n=1}^{+\infty}(-1)^{2n-1}\frac{(-i)^{2n-1}}{(2n-1)!}\left(\frac{\partial}{\partial x_1}+\frac{\partial}{\partial x_2}\right)^{2n-1}u(x).
$$
After that we define the action of this operator via Fourier transform \eqref{Fourier_trans}, using the following property
$$
F[(-i\frac{\partial}{\partial x_j})^ku(x)](p)=\frac{1}{2\pi}\int\limits_{\mathbb{R}^2}(-i\frac{\partial}{\partial x_j})^ku(x) e^{-ip\cdot x}dx=p_j^k \frac{1}{2\pi}\int\limits_{\mathbb{R}^2}u(x) e^{-ip\cdot x}dx=p_j^k \hat{u}(p).
$$
Thus we can write
$$
F[\sin\left(-i\frac{\partial}{\partial x_1}-i\frac{\partial}{\partial x_2}\right)u(x)](p)=\sum\limits_{n=1}^{+\infty}(-1)^{2n-1}\frac{1}{(2n-1)!}(p_1+p_2)^{2n-1}\hat{u}(p)=\sin(p_1+p_2)\hat{u}(p),
$$
and the inverse Fourier transform gives
$$
\sin\left(-i\frac{\partial}{\partial x_1}-i\frac{\partial}{\partial x_2}\right)u(x)=F^{-1}[\sin(p_1+p_2)\hat{u}(p)](x)=\frac{1}{2\pi}\int\limits_{\mathbb{R}^2}\sin(p_1+p_2)\hat{u}(p)e^{i p\cdot x}dp.
$$
Function $\sin(p_1+p_2)$ is called the symbol of the $\Psi$DO $\sin\left(-i\frac{\partial}{\partial x_1}-i\frac{\partial}{\partial x_2}\right)$.

In the case of the variable coefficients we need to take into account the non-commutative properties of the operator of the differentiation in $x$ and operator of multiplication by variable $x$. For example to the one symbol $O(x,\,p)=px$ we can assign two different operators
\begin{gather*}
O_1(x,\,-i\varepsilon\frac{d}{dx})u(x)=x(-i\frac{d}{dx}u(x))=-i xu'(x),\\ 
O_2(x,\,-i\frac{d}{dx})u(x)=-i\frac{d}{dx}(xu(x))=-i xu'(x)-i\varepsilon u(x).
\end{gather*}
In the operator $O_1$ the differentiation acts first and then multiplication by variable $x$. In the operator $O_2$ the action is reverse. 

To distinguish these operators we introduce the so-called quantization --- the order of the action of the operators of the differentiation and multiplication. We denote this order by the numbers above, for example
$$
O_1(x,\,-i\frac{d}{dx})=O(\stackrel{2}{x},\,\stackrel{1}{-i\frac{d}{dx}}).
$$

\begin{definition}
The action of the pseudo-differential operator $L(\stackrel{2}{x},\,\stackrel{1}{-i\nabla})$
on the function $u(x)$ is defined via Fourier transform
$$
L(\stackrel{2}{x},\,\stackrel{1}{-i\nabla})u(x)=F^{-1}[L(x,\,p)\hat{u}(p)](x).
$$

Function $L(x,\,p)$ is called the symbol of the $\Psi$DO $L(\stackrel{2}{x},\,\stackrel{1}{-i\nabla})$. 
We call this quantization rule the classical quantization rule.

\end{definition}

The classical quantization rule is similar to what was done in the case of the constant coefficients.

We also need the results of the composition of the $\Psi$DO $L(\stackrel{2}{x},\,\stackrel{1}{-i\nabla})$ with differential operator. Let us calculate
$$
u_2(x)=-i \frac{\partial}{\partial x_j} u_1(x),\quad u_1(x)=L(\stackrel{2}{x},\,\stackrel{1}{-i\nabla}) u(x).
$$
We use the rule of the differentiation of the product, and thus we have
$$
u_2(x)=-i L_{x_j}(\stackrel{2}{x},\,\stackrel{1}{-i\nabla})u(x)+L(\stackrel{2}{x},\,\stackrel{1}{-i\nabla}) u_{x_j}(x)
$$

Thus the composition of the differential operator and of the $\Psi$DO is also a pseudo-differential operator with the symbol
$$
L(x,\,p)p_j-i \frac{\partial}{\partial x_j}L(x,\,p).
$$

\bibliographystyle{unsrt}      % or unsrt, alpha, ieeetr, abbrv, etc.
\bibliography{ref}

\end{document}